\numberwithin{equation}{section}
\newtheorem{thm}{Theorem}[section]
\newtheorem{prop}[thm]{Proposition}
\newtheorem{lem}[thm]{Lemma}
\newtheorem{remark}[thm]{Remark}
\newcommand{\real}{{\mathbb R}}
\newcommand{\ent}{{\mathbb Z}}
\newcommand{\norm}[1]{\left\Vert#1\right\Vert}
\newcommand{\abs}[1]{\left\vert#1\right\vert}
\newcommand{\M}{{\mathcal M}}
\renewcommand{\P}{{\mathcal P}}
\renewcommand{\a}{\alpha}
\newcommand{\e}{\varepsilon}
\newcommand{\f}{\varphi}
\newcommand{\8}{\infty}
\newcommand{\red}[1]{\textcolor{red}{#1}}
\begin{document}

\title[Differential transforms for fractional Poisson type operator sequence]
{Boundedness of differential transforms for one-sided fractional Poisson-type operator sequence}

\thanks{{\it 2010 Mathematics Subject Classification:} 42B20, 42B25.}
\thanks{{\it Key words:} differential transforms, heat semigroup, fractional Poisson operator.}

\author{Zhang Chao,\ Tao Ma\ and\ Jos\'e L. Torrea}

 \address{School of Statistics and Mathematics \\
             Zhejiang Gongshang University \\
             Hangzhou 310018, P.R. China}
 \email{zaoyangzhangchao@163.com}

\address{School of Mathematics and Statistics, Wuhan University, Wuhan 430072, P.R. China }
\email{tma.math@whu.edu.cn}

\address{Departamento de Matem\'aticas, Facultad de Ciencias, Universidad
Aut\'onoma de Madrid, 28049 Madrid, Spain }
\email{joseluis.torrea@uam.es}

\thanks{The first author was supported by the Natural Science Foundation of Zhejiang Province(Grant No. LY18A010006), the first Class Discipline of Zhejiang - A (Zhejiang Gongshang University- Statistics) and the State Scholarship Fund(No. 201808330097).  The second author was supported by National Natural Science Foundation of China(Grant Nos. 11671308, 11431011) and the independent research project of Wuhan University (Grant No. 2042017kf0209). The third author was supported  by grant MTM2015-66157-C2-1-P (MINECO/FEDER) from Government of Spain.}

\date{}
\maketitle

\begin{abstract}
 Let $\P_{\tau}^\a f$ be given by
\begin{align*}\label{Formu:subordination}
 \P_{\tau}^\a f(t)=\frac 1{4^\alpha \Gamma(\alpha)}
\int_0^{+\infty}\frac{\tau^{2\alpha}e^{-{\tau^2}/(4s)}}{s^{1+\alpha}}f(t-s)ds, \, \tau >0,\,  t \in \real,\,  0 < \alpha <1.
\end{align*}
It is known that the function $U^\alpha(t,\tau) = \P^\alpha_\tau f(t)$ is a classical solution  to the extension problem
$$
-D_{\rm left}U^\alpha+\frac{1-2\a}{\tau}\,U^\alpha_\tau+U^\alpha_{\tau\tau}=0, \quad \hbox{in}~ \real\times(0,\infty)$$
and
$$\lim_{\tau\to0^+}\P_\tau^\alpha f(t)=f(t),\quad   a.e.\ \hbox{and in}~L^p(\real, w)\hbox{-norm},  w \in A_p^-.$$
In this paper, we analyze the convergence speed of a series related with $\P_{\tau}^\a f$ by discussing
 the behavior of the family of operators
 \begin{equation*}
 T_N^\a f(t)=\sum_{j=N_1}^{N_2} v_j(\P_{a_{j+1}}^\a f(t)-\P_{a_j}^\a f(t)),\quad  ~N=(N_1,N_2)\in \ent^2\quad  \hbox{with} \quad N_1<N_2,
\end{equation*}
where $\{v_j\}_{j\in \mathbb Z}$ is a bounded number sequence, and  $\{a_j\}_{j\in \ent}$ is  a $\rho$-lacunary sequence of positive numbers, that is,
 $1<\rho \leq a_{j+1}/a_j,  \text{for all}\ j\in \mathbb Z.$
 We shall show the boundedness of the maximal operator
  \begin{equation*}\label{Formu:MaxSquareFun}
 T^*f(t)=\sup_N \abs{T_N^\alpha f(t)}, \quad t\in\real,
\end{equation*}
in the one-sided weighted Lebesgue spaces $L^p(\mathbb{R},\omega)(\omega \in A_p^-$), $1< p < \infty$. As a consequence we infer the existence of the limit, in norm and almost everywhere,   of the family $ T_N^\a f$ for functions in $L^p(\mathbb{R},\omega)$. Results for $L^1(\mathbb{R},\omega)(\omega \in A_1^-)$, $L^\infty(\real)$ and $BMO(\real)$ are also obtained.

It is also shown that the local size of  $T^*f$, for functions $f$ having local support, is the same with the order  of a singular integral. Moreover, if $\{v_j\}_{j\in \mathbb Z}\in \ell^p(\mathbb Z)$, we get an intermediate size between the local size of singular integrals and Hardy-Littlewood maximal operator.

 \end{abstract}

\bigskip
\section{Introduction}

Let $\P_{\tau}^\a f$ be given by
\begin{align}\label{Formu:subordination}
 \P_{\tau}^\a f(t)=\frac 1{4^\alpha \Gamma(\alpha)}
\int_0^{+\infty}\frac{\tau^{2\alpha}e^{-{\tau^2}/(4s)}}{s^{1+\alpha}}f(t-s)ds, \, \tau >0,\,  t \in \real,\,  0 < \alpha <1.
\end{align}
This is a fractional Poisson-type operator on the line, which can be found in \cite{Bernardis}.
It is known that the Poisson-type operator appeared when solving the extension problem, see
\cite{CaffarelliSil, StingaTorreaExten, StingaTorreaRegu}.
In \cite{Bernardis}, the authors showed that $\P_{\tau}^\a$ is a classical solution to a version of extension problem for the given initial data $f$ in a weighted space $L^p(w)$, where $w$ satisfies the one-sided $A_p$ condition. Moreover, in this extension problem, they proved that the fractional derivatives on the line are Dirichlet-to-Neumann operators. Precisely,
 it is shown that  for functions $f \in L^p(\mathbb{R}, w), w \in A_p^-, 1<p< \infty$, the function $U^\alpha(t,\tau) = \P^\alpha_\tau f(t)$ is a classical solution  to the extension problem
$$\begin{cases}
\displaystyle -D_{\rm left}U^\alpha+\frac{1-2\a}{\tau}\,U^\alpha_\tau+U^\alpha_{\tau\tau}=0,& \hbox{in}~\real\times(0,\infty), \\
\displaystyle \lim_{\tau\to0^+}\P^\alpha_\tau f(t)=f(t), & a.e.\ \hbox{and in}~L^p(\real, w)\hbox{-norm.}
\end{cases}$$
Moreover, for $\displaystyle c_\a :=\frac{4^{\a-1/2}\Gamma(\a)}{\Gamma(1-\a)}>0$,
$$-c_\a\lim_{\tau\to0^+}\tau^{1-2\a}U^\alpha_\tau(t,\tau)=(D_{\rm left})^\alpha f(t),\quad\hbox{in the distributional sense}.$$
In the above formulas, $$\displaystyle D_{\rm left} f(t) = \lim_{s\to  0^-} \frac{f(t)-f(t-s)}{s}\quad \hbox{ and}\quad
\displaystyle(D_{\rm left})^\alpha f(t) = \frac1{\Gamma(-\alpha)} \int_0^\infty \frac{f(t-s)-f(t)}{s^{\alpha+1}} ds.$$
By $A_p^-$ we denote the class of lateral weights introduced by E.Sawyer  \cite{Sawyer}, see  (\ref{A1+})  and (\ref{Ap+}).

The purpose of this note is to give some extra information about the convergence of the family $\displaystyle \{\P_{\tau}^\a f\}_{\tau>0}$. In order to do this, we shall discuss the behavior of the series
\begin{equation*}\label{diferential}
 \sum_{j\in \ent} v_j(\P_{a_{j+1}}^\a f(t)-\P_{a_j}^\a f(t)),
\end{equation*}
where $\{v_j\}_{j\in \mathbb Z}$ is a sequence of bounded numbers  and  $\{a_j\}_{j\in \ent}$ is  a $\rho$-lacunary sequence of positive numbers, that is,
 $1<\rho \leq a_{j+1}/a_j,  \text{for all}\ j\in \mathbb Z.$ This way to analyze convergence of sequences was considered by Jones and Rosemblatt for ergodic averages(see \cite{JR}), and latter by Bernardis et al. for differential transforms(see \cite{BLMMDT}).

For each $N\in \ent^2,~N=(N_1,N_2)$ with $N_1<N_2$, we define the sum
\begin{equation}\label{Equ:FinSquareFun}
 T_N^\a f(t)=\sum_{j=N_1}^{N_2} v_j(\P_{a_{j+1}}^\a f(t)-\P_{a_j}^\a f(t)).
\end{equation}
We shall consider the   maximal operator
\begin{align}\label{Formu:MaxSquareFun}
\nonumber & T_\alpha^*f(t)=\sup_N \abs{T_N^\alpha f(t)}, \quad t\in\real. \\
\hbox {Along the paper,}&\hbox{ we shall denote}   T^*  \hbox{ to be }  T_\alpha^*   \hbox{ for simply}.
\end{align}
The supremum are taken over all $N=(N_1,N_2)\in \ent^2$ with $N_1< N_2$.

In order to prove the  results, we  shall use the   vector-valued Calder\'on-Zygmund theory  in an essential way.  In  the proof of   the maximal operator $T^*$, we shall use a kind of Cotlar's lemma that in some sense is parallel to the classical Cotlar's inequality   used to control the maximal operator of the truncations in the Calder\'on-Zygmund theory. Looking  to the first set of our results, the reader could have the impression that  the operator $T^*$  is of the same size of the maximal operator $\mathcal{M}^-.$  In this line of thought we present a series of results contained in  Theorem \ref{Thm:infny} and Theorem \ref{Thm:GrothLinfinity} in which it is shown that the size of $T^*$ acting over functions of compact support is in fact of the order of a singular integral. At this point we want to observe the analogy of our operators with   martingale transforms.
  On the other hand  if we consider the sequence of Rademacher functions  $\{r_j\}_{j\in \mathbb Z}$, by Kintchine's inequality we have
\begin{eqnarray*}\left\|\Big(\sum_{j\in \ent}  |\P_{a_{j+1}}^\a f(\cdot)-\P_{a_j}^\a f(\cdot)|^2 \Big)^{1/2}\right\|_{L^p(\mathbb{R})}  \le K_p\left\|\,\, \left\| \sum_{j\in \ent} r_j(\cdot)(\P_{a_{j+1}}^\a f(\cdot)-\P_{a_j}^\a f(\cdot))\right\|_{L^p(\Omega)}\,\,\right\|_{L^p(\mathbb{R})}.
\end{eqnarray*}
In other words, as a by product of our results we get the boundedness of the operator $$ \Big(\sum_{j\in \ent}  |\P_{a_{j+1}}^\a f(\cdot)-\P_{a_j}^\a f(\cdot)|^2 \Big)^{1/2}$$ in the same spaces that we get for operator $T^*$.
Finally, in Theorem
\ref{Thm:GrothLinfinity} it is also shown that if we assume the sequence $\{v_j\}_{j\in \mathbb Z}\in \ell^p(\mathbb Z)$, then the local behavior of $T^*$ is approaching to the maximal operator as $p\rightarrow 1^+.$
Now we present our main results.
\begin{thm}\label{thm:Maxi} Let $0<\alpha <1$, $\{v_j\}_{j\in \mathbb Z}$ a sequence of bounded numbers and  $\{a_j\}_{j\in \ent}$   a $\rho$-lacunary sequence of positive numbers.  Let  $T^*$  be  defined in \eqref{Formu:MaxSquareFun}.
\begin{enumerate}[(a)]
    \item For any $1<p<\infty$ and $\omega\in A_p^-$,  there exists a constant $C$ depending
    on $p, \rho, \alpha, \omega$ and $\norm{v}_{l^\infty(\mathbb Z)}$ such that
 $$\norm{T^*f}_{L^p(\mathbb R, \omega)}\leq C\norm{f}_{L^p(\mathbb R, \omega)},$$
 for all functions $f\in L^p(\mathbb R, \omega).$
    \item For any  $\omega\in A_1^-$, there exists a constant $C$ depending  on $\rho, \alpha, \omega $ and $\norm{v}_{l^\infty(\mathbb Z)}$ such that
 $$\omega\left({\{-\infty<t<+\infty:\abs{T^*f(t)}>\lambda\}}\right) \le C\frac{1}{\lambda}\norm{f}_{L^1(\mathbb R, \omega)},  \quad \lambda >0,$$
for all functions $f\in L^1(\mathbb R, \omega).$
    \item Given $f\in L^\infty(\real),$ then either $T^* f(t) =\infty$ for all $t\in \mathbb R$, or $T^* f(t) < \infty$ for $a. e.$  $t\in \mathbb R$. And in this later case, there exists a constant $C$ depending on $\rho$, $\alpha$ and  $\norm{v}_{l^\infty(\mathbb Z)}$ such that
        \begin{equation*}\norm{T^*f}_{BMO(\mathbb R)}\leq C\norm{f}_{L^\infty(\mathbb R)}.
        \end{equation*}
\item Given $f\in BMO(\real),$ then either $T^* f(t) =\infty$ for all $t\in \mathbb R$, or $T^* f(t) < \infty$ for $a. e.$  $t\in \mathbb R$.  And in this later case,  there exists a constant $C$ depending  on $\rho$, $\alpha$ and  $\norm{v}_{l^\infty(\mathbb Z)}$ such that
\begin{equation}\label{sharp}\norm{T^*f}_{ BMO(\mathbb R)}\leq C\norm{f}_{BMO(\mathbb R)}.
\end{equation}
\end{enumerate}
\end{thm}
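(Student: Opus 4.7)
The plan is to realize $T^*$ as the scalar supremum of an $\el^\infty$--valued one-sided singular integral operator, and then to invoke the one-sided vector-valued Calder\'on--Zygmund theory for $A_p^-$ weights. Setting
$$k_\tau^\a(s)=\frac{1}{4^\a \Gamma(\a)}\,\frac{\tau^{2\a}e^{-\tau^2/(4s)}}{s^{1+\a}}\chi_{(0,\infty)}(s),$$
one has $\P_\tau^\a f=k_\tau^\a * f$, and hence $T_N^\a$ is convolution with the one-sided kernel
$$K_N(s)=\sum_{j=N_1}^{N_2} v_j\bigl(k_{a_{j+1}}^\a(s)-k_{a_j}^\a(s)\bigr),\qquad \operatorname{supp} K_N\subset(0,\infty).$$
Viewing $\mathbf{K}=(K_N)_{N}$ as an $\el^\infty$--valued kernel produces an operator $\mathbf{T}$ for which $T^*f(t)=\norm{\mathbf{T}f(t)}_{\el^\infty}$; all four assertions of Theorem~\ref{thm:Maxi} can then be read as scalar statements about $\mathbf{T}$ in appropriate spaces.

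The core step is the unweighted $L^2(\real)$ bound for $T^*$. First I would check that each single $T_N^\a$ is uniformly bounded on $L^2$: on the Fourier side $\widehat{k_\tau^\a}$ is a smooth multiplier with $\widehat{k_\tau^\a}(0)=1$, and the difference $\widehat{k_{a_{j+1}}^\a}-\widehat{k_{a_j}^\a}$ is essentially localised in the lacunary frequency band $|\xi|\sim 1/a_j^2$, so that Abel summation together with the $\rho$--lacunarity yields a uniform multiplier bound $\norm{m_N}_{L^\infty}\le C\,\norm{v}_{\el^\infty}$. The leap from the single-scale estimate to the supremum is the Cotlar-type inequality announced in the introduction: one dominates pointwise
$$T^*f(t)\le C\bigl(\M^-(T_{N(t)}^\a f)(t)+\M^- f(t)\bigr),$$
with $N(t)$ an $\e$--almost maximising choice, where $\M^-$ is the one-sided Hardy--Littlewood maximal operator. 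Taking $L^2$ norms and using the single-scale bound then closes the estimate.

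Next, exploiting the $\rho$--lacunarity of $\{a_j\}$, I would verify the one-sided standard kernel conditions uniformly in $N$,
$$\sup_N \abs{K_N(s)}\le \frac{C}{s}\chi_{(0,\infty)}(s),\qquad \sup_N \abs{K_N'(s)}\le \frac{C}{s^2}\chi_{(0,\infty)}(s),$$
together with the corresponding H\"ormander-type conditions. The mechanism is that $k_\tau^\a$ is essentially a $C^1$ bump of $L^1$--mass one at scale $s\sim \tau^2$, so $k_{a_{j+1}}^\a-k_{a_j}^\a$ is of size $1/s$ on $s\sim a_j^2$ with Gaussian-type decay off that annulus; the $\rho$--lacunarity reduces, at each fixed $s$, the effective sum over $j$ to $O(1)$ terms and leaves only a factor $\norm{v}_{\el^\infty}$. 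Combined with the $L^2$ bound, the one-sided vector-valued Calder\'on--Zygmund theory (in the spirit of Aimar--Forzani--Mart\'{\i}n-Reyes and Mart\'{\i}n-Reyes--de la Torre) yields (a) and (b). Parts (c) and (d) are then standard: for an interval $I$ one splits $f=f\chi_{3I}+f\chi_{(3I)^c}$; the local piece is handled by (a) with $p=2$, while the far piece is controlled using only the uniform smoothness of $K_N$. The dichotomy ``$T^*f\equiv\infty$ or a.e.\ finite'' is the Stein-type consequence of the far-field pointwise bound depending only on $I$.

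The main obstacle is the Cotlar-type inequality itself: since the family $\{T_N^\a\}$ is parametrised by pairs $(N_1,N_2)\in\ent^2$ and no individual $T_N^\a$ is a truncation of a fixed singular integral, the comparison operator at each $t$ must be chosen so that the discrepancy $T^*f(t)-T_{N(t)}^\a f(t)$ is controllable by a one-sided Hardy--Littlewood-type average of $f$. It is precisely the $\rho$--lacunarity of $\{a_j\}$ together with the Gaussian decay of the $k_\tau^\a$ that permits this discrepancy estimate to close uniformly.
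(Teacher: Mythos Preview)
Your overall strategy is essentially the paper's: uniform $L^2$ boundedness of each $T_N^\alpha$, one-sided Calder\'on--Zygmund kernel bounds $|K_N(s)|\le C/s$ and $|K_N'(s)|\le C/s^2$ uniformly in $N$, a Cotlar-type inequality to pass from the single operators to the supremum, and then vector-valued one-sided CZ theory for (a)--(d). The kernel estimates and the treatment of (c), (d) are carried out in the paper exactly as you sketch.

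The genuine gap is the form of your Cotlar inequality. As written, with $N(t)$ an $\varepsilon$-almost maximiser, the bound
$T^*f(t)\le C\bigl(\M^-(T_{N(t)}^\alpha f)(t)+\M^- f(t)\bigr)$
is essentially tautological (since $|T_{N(t)}^\alpha f(t)|\ge T^*f(t)-\varepsilon$ already) and cannot be closed in $L^2$: the map $t\mapsto T_{N(t)}^\alpha f(t)$ is not a linear operator, so the ``single-scale bound'' does not apply to it. The correct device, which the paper carries out, is to truncate the index range to $\{-M,\dots,M\}$ and compare every partial sum $T_{N_1,N_2}^\alpha$ with $-M\le N_1<N_2\le M$ to the \emph{fixed} operator $T_{-M,M}^\alpha$, obtaining
\[
\sup_{-M\le N_1<N_2\le M}|T_{N_1,N_2}^\alpha f(t)|\le C\bigl(\M^-(T_{-M,M}^\alpha f)(t)+\M_q^- f(t)\bigr),\qquad q>1.
\]
Now $T_{-M,M}^\alpha$ is a single linear operator, the uniform $L^p(\omega)$ bound applies to it, and one lets $M\to\infty$. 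Note also the appearance of $\M_q^-$ rather than $\M^-$: one of the discrepancy terms is $\frac{1}{a_m^2}\int_{-a_m^2}^0|T_{-M,M}^\alpha f_1|$ for a local piece $f_1$, and this can only be controlled via H\"older and the uniform $L^q$ bound of $T_N^\alpha$; this is why the weighted $L^p$ estimate requires choosing $1<q<p$ with $\omega\in A_{p/q}^-$.

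A smaller remark: the paper's proof of the uniform $L^2$ bound does not use the lacunarity of $\{a_j\}$ at all. One simply writes $|m_N(\rho)|\le \|v\|_{\ell^\infty}\int_0^\infty|\partial_\tau\widehat{k_\tau^\alpha}(\rho)|\,d\tau$ and shows the last integral is finite uniformly in $\rho$; lacunarity enters only in the Cotlar step.
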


We have denoted by  $L^p(\mathbb{R},\omega),  1 \le p < \infty, $ the Lebesgue space of measurable functions satisfying
$$\int_{\mathbb{R}} |f(t)|^p \omega(t) dt < \infty,$$ and
$L^\infty(\mathbb{R})$ the space of measurable functions such that
$\displaystyle \mathop{\hbox{ess sup}}_{t\in\mathbb{R}} |f(t)| < \infty$. Both of them are with the obvious norms.
Also, we define $BMO(\mathbb{R})$ as the space of measurable functions such that  for any interval $B$,
$$ \frac1{|B|} \int_B  \Big| f(t)-f_B\Big| dt \le C <\infty,$$ and
$\displaystyle \|f \|_{BMO(\real)} =  \sup_B \frac1{|B|} \int_B  \Big| f(t)- f_B\Big| dt$, where $\displaystyle f_B=\frac 1 {|B|}\int_B f(t)dt$.   For more details, see \cite{Duo}.

The proof of the last theorem contains three steps:
\begin{itemize}
\item[(A)] We prove  the following uniform boundedness of the family of operators $T_N^\alpha$: from $L^p(\mathbb R, \omega)$ into $L^p(\mathbb R, \omega)$, $1<p< \infty$, from $L^1(\mathbb R, \omega)$ into weak-$L^1(\mathbb R, \omega)$, from $L^\infty(\real)$ into  ${BMO(\real)}$, and from ${BMO(\real)}$  into ${BMO(\real)}$, see Theorem \ref{Thm:PoissonLpOne}.
\item[(B)] The following pointwise Cotlar's type inequality
\begin{equation*}
\sup_{-M\le N_1<N_2\le M} |T^\alpha_{ N_1,N_2}f(t)|\le C\left\{\M^- (T_{-M, M}^\alpha f)(t)+\M^-_q f(t)\right\},
\end{equation*}
see Theorem \ref{Thm:Maximalcontrol}.
\item[(C)] The boundedness of $\M^-$ and the uniform boundedness of $T_{-M, M}^\alpha $ in $L^p(\mathbb R, \omega)$ show the boundedness of the maximal operator $T^*$ in $L^p(\mathbb R, \omega)$. The use of the vector-valued Calder\'on-Zygmund theory allows us to get all of the statements in Theorem \ref{thm:Maxi}.
\end{itemize}

The last theorem has the following consequence.

\begin{thm}\label{Thm:ae}\begin{enumerate}[(a)]
    \item If $1<p<\infty$ and $\omega\in A_p^-$, then $T^\alpha_N f$ converges {\it{a.e.}} and  in $L^p(\mathbb R, \omega)$ norms for all $f\in L^p(\mathbb R, \omega)$ as $N=(N_1,N_2)$ tends to $(-\infty, +\infty).$
   \item If $p=1$  and $\omega\in A_1^-$, then $T^\alpha_N f$ converges {\it{a.e.}} and in measure for all $f\in L^1(\mathbb R, \omega)$ as $N=(N_1,N_2)$ tends to $(-\infty, +\infty).$
\end{enumerate}
\end{thm}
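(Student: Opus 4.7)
The plan is the standard two–step reduction: establish pointwise convergence on a dense subspace directly from the structure of the telescoping sum, and then upgrade to all of $L^p(\mathbb{R},\omega)$ using the maximal estimate of Theorem \ref{thm:Maxi}. The space I would use as dense subspace is $C_c^\infty(\mathbb{R})$, which is dense in $L^p(\mathbb{R},\omega)$ for any $\omega\in A_p^-$, $1\le p<\infty$.

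First I would show that for every $f\in C_c^\infty(\mathbb{R})$ and every $t\in\mathbb{R}$ the series
$$\sum_{j\in\ent} v_j\bigl(\P_{a_{j+1}}^\a f(t)-\P_{a_j}^\a f(t)\bigr)$$
converges absolutely. For this I would use the change of variable $u=\tau^2/(4s)$ to rewrite $\P_\tau^\a f$ as the convex combination
$$\P_\tau^\a f(t)=\frac{1}{\Gamma(\a)}\int_0^\infty u^{\a-1}e^{-u}f\!\left(t-\frac{\tau^2}{4u}\right)du.$$
From this representation two pointwise estimates follow for $f\in C_c^\infty$:
$$|\P_\tau^\a f(t)-f(t)|\le C_f\,\tau^{2\a}\quad\text{for }\tau\text{ small},\qquad |\P_\tau^\a f(t)|\le C_{f,t}\,\tau^{-M}\quad\text{for }\tau\text{ large, any }M,$$
the second one because $f$ being compactly supported forces the integration in $u$ to take place in a range whose lower end grows like $\tau^2$. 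Combined with the lacunarity $\rho\le a_{j+1}/a_j$, this gives $|\P_{a_{j+1}}^\a f(t)-\P_{a_j}^\a f(t)|\le C\,a_j^{2\a}$ as $j\to-\infty$ and rapid (at least polynomial) decay in $a_j$ as $j\to+\infty$, hence a geometrically summable bound. Since $\{v_j\}$ is bounded, the doubly infinite series converges absolutely at each $t$, so $T_N^\a f(t)$ converges pointwise (and, on each fixed compact set of $t$, uniformly) as $(N_1,N_2)\to(-\infty,+\infty)$.

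Next I would invoke the Banach principle: since Theorem \ref{thm:Maxi}(a)--(b) says that $T^*$ is of strong type $(p,p)$ on $L^p(\mathbb{R},\omega)$ for $\omega\in A_p^-$, $1<p<\infty$, and of weak type $(1,1)$ on $L^1(\mathbb{R},\omega)$ for $\omega\in A_1^-$, the set
$$\bigl\{f:\ T_N^\a f(t)\text{ converges a.e.\ as }(N_1,N_2)\to(-\infty,+\infty)\bigr\}$$
is closed in the corresponding (quasi-)norm. Because it contains the dense class $C_c^\infty$, it equals the whole space, giving the a.e.\ statements in both (a) and (b).

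Finally, I would upgrade almost-everywhere convergence to norm / measure convergence. For (a), pick $f_n\in C_c^\infty$ with $f_n\to f$ in $L^p(\mathbb{R},\omega)$; by the uniform bound on the family $\{T_N^\a\}$ (Theorem \ref{Thm:PoissonLpOne}),
$$\norm{T_N^\a f-T_M^\a f}_{L^p(\omega)}\le 2C\norm{f-f_n}_{L^p(\omega)}+\norm{T_N^\a f_n-T_M^\a f_n}_{L^p(\omega)}.$$
For fixed $n$, the pointwise convergence from Step 1 combined with the domination $|T_N^\a f_n-T_M^\a f_n|\le 2T^*f_n\in L^p(\omega)$ lets dominated convergence send the second term to $0$, while the first is controlled by density; hence $\{T_N^\a f\}$ is Cauchy in $L^p(\omega)$. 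For (b), the same triangle inequality with the weak-type $(1,1)$ bound replacing the strong bound yields a Cauchy property in measure, which together with the a.e.\ convergence proves the statement. The main obstacle is really Step 1: making the two pointwise decay estimates for $\P_\tau^\a f$ rigorous enough to secure absolute convergence on the dense class; everything else is a formal consequence of the maximal bounds already established.
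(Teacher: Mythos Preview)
Your proposal is correct and follows the same overall architecture as the paper: establish pointwise convergence of $T_N^\alpha\varphi$ for $\varphi\in C_c^\infty(\mathbb{R})$, then lift to all of $L^p(\mathbb{R},\omega)$ via the maximal inequality of Theorem~\ref{thm:Maxi}, and finally pass to norm/measure convergence by dominated convergence.

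The only genuine difference is in the execution of the dense-class step. The paper works at the kernel level: for the tail $j\ge L$ it applies the mean value theorem to the kernel difference and sums $\sum_{j\ge L} a_j^{-2}\lesssim a_L^{-2}\to 0$; for the tail $j\le -L$ it exploits that $\int_0^\infty (a_{j+1}^{2\alpha}e^{-a_{j+1}^2/(4s)}-a_j^{2\alpha}e^{-a_j^2/(4s)})s^{-1-\alpha}\,ds=0$ to replace $\varphi(t-s)$ by $\varphi(t-s)-\varphi(t)$, splits $\int_0^1+\int_1^\infty$, and lands on $\sum_{j\le -L} a_j^{2\alpha}\lesssim a_{-L}^{2\alpha}\to 0$. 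You instead rewrite $\P_\tau^\alpha f(t)=\Gamma(\alpha)^{-1}\int_0^\infty u^{\alpha-1}e^{-u}f(t-\tau^2/(4u))\,du$ and read off the two endpoint estimates $|\P_\tau^\alpha f(t)-f(t)|\le C\tau^{2\alpha}$ and $|\P_\tau^\alpha f(t)|\le C_{f,t}\,\tau^{-M}$ (in fact exponential decay), from which absolute summability of the telescoping series follows by lacunarity. Your route is arguably cleaner in that it does not require isolating the cancellation in the kernel by hand---the subtraction of $f(t)$ is built into the bound $|\P_\tau^\alpha f - f|\le C\tau^{2\alpha}$---while the paper's route stays closer to the Calder\'on--Zygmund kernel estimates already developed in Section~\ref{Sec:mainproof}. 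Both yield the same decay rates $a_j^{2\alpha}$ (small $j$) and $a_j^{-2}$ or better (large $j$), so neither buys anything the other does not.
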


The dichotomy results announced  in Theorem \ref{thm:Maxi}, parts $(c)$ and $(d)$, about $L^\infty(\mathbb R)$ and $BMO(\mathbb R)$   are  motivated, in part,   by the existence of a bounded function $f$ such that $T^*f(t)=\infty$ as the following theorem shows.
 \begin{thm}\label{Thm:infny}
  There exist bounded sequence $\{v_j\}_{j\in \mathbb Z}$, $\rho$-lacunary sequence $\{a_j\}_{j\in \mathbb Z}$  and $f\in L^\infty(\mathbb R)$ such that  $T^* f(t) =\infty$ for all $t\in \mathbb R$.
\end{thm}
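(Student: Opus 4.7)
The plan is to exhibit $f\in L^\infty(\real)$, a bounded sequence $\{v_j\}$ and a $\rho$-lacunary sequence $\{a_j\}$ for which $T^*f(t)=\infty$ at every $t$ can be verified by direct computation. Fix any $\rho>1$ and take $a_j=\rho^j$, $v_j=(-1)^{j+1}$, and
\begin{equation*}
f(x)=\cos\bigl(\lambda\log|x|+\theta\bigr)\chi_{(-\infty,0)}(x),\qquad \lambda=\frac{\pi}{2\log\rho},
\end{equation*}
where the phase $\theta$ will be fixed below. The rationale is that $s\mapsto s^{i\lambda}$ is essentially an eigenfunction of the Mellin-type integral hiding inside \eqref{Formu:subordination}, while the specific choice $2\lambda\log\rho=\pi$ forces the dyadic sampling $\tau=\rho^j$ to yield an exactly alternating pattern.

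The heart of the argument is the evaluation at $t=0$. Performing the substitution $u=\tau^2/(4s)$ in \eqref{Formu:subordination} converts $\P^\a_\tau f(0)$ into an integral of $\cos(\lambda\log(\tau^2/(4u))+\theta)$ against the Gamma$(\alpha,1)$ density $u^{\a-1}e^{-u}/\Gamma(\a)$, which is a Mellin transform that evaluates in closed form:
\begin{equation*}
\P^\a_\tau f(0)=\Re\!\left[e^{i(\lambda\log(\tau^2/4)+\theta)}\,\frac{\Gamma(\a-i\lambda)}{\Gamma(\a)}\right]=R\cos\bigl(\lambda\log(\tau^2/4)+\theta+\phi\bigr),
\end{equation*}
with $Re^{i\phi}=\Gamma(\a-i\lambda)/\Gamma(\a)$ and $R>0$. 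Substituting $\tau=\rho^j$ and using $2\lambda\log\rho=\pi$ gives $\P^\a_{\rho^j}f(0)=C_\theta(-1)^j$ with $C_\theta=R\cos(\theta+\phi-\lambda\log 4)$; since $R>0$, at least one of $\theta=0$ or $\theta=\pi/2$ yields $C_\theta\ne 0$. Then $\P^\a_{a_{j+1}}f(0)-\P^\a_{a_j}f(0)=2C_\theta(-1)^{j+1}$, and with $v_j=(-1)^{j+1}$ each summand of $T^\a_N f(0)$ equals $2C_\theta$, so $T^\a_{N_1,N_2}f(0)=2C_\theta(N_2-N_1+1)$ and $T^*f(0)=\infty$.

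To propagate divergence to every $t\in\real$ I plan a perturbative argument. The elementary expansion
\begin{equation*}
\log\bigl|\rho^{2j}/(4u)-t\bigr|=\log(\rho^{2j}/(4u))+O(u|t|/\rho^{2j}),
\end{equation*}
valid on the bulk of the Gamma density once $\rho^{2j}\gtrsim|t|$, together with the super-polynomially small tail of the Gamma density and (for $t>0$) the truncation coming from $\chi_{\rho^{2j}/(4u)>t}$, produces $\P^\a_{\rho^j}f(t)=C_\theta(-1)^j+\varepsilon_j(t)$ with $|\varepsilon_j(t)|\lesssim|t|/\rho^{2j}$. Summing over $j\ge N_1(t)$ chosen so that $\rho^{2N_1}\gtrsim|t|$, the total error contribution to $T^\a_{N_1,N_2}f(t)$ is bounded by a convergent geometric series, while the main term still reads $2C_\theta(N_2-N_1+1)$; letting $N_2\to\infty$ gives $T^*f(t)=\infty$ for every $t\in\real$. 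The only delicate step is keeping track of the expansion error and the truncation uniformly in $u$, which the exponential decay of the Gamma density renders routine.
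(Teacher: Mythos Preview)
Your approach is correct and takes a genuinely different route from the paper's. The paper constructs a step function $f(s)=\sum_{k\in\mathbb Z}(-1)^k\chi_{(-a^{2k+1},-a^{2k}]}(s)$, which satisfies the same multiplicative homogeneity $f(a^{2j}s)=(-1)^jf(s)$ as your $\cos(\lambda\log|x|+\theta)\chi_{(-\infty,0)}$; but instead of an exact Mellin evaluation, the paper fixes $a$ large enough (via an absolute-convergence and limiting argument) so that the constant $C_1=\int_0^\infty u^{-\alpha-1}e^{-1/(4u)}f(-u)\,du$ is strictly positive, and then passes to general $t$ by a soft continuity step: dominated convergence gives $\eta_0>0$ such that the same integral with $f(h-u)$ in place of $f(-u)$ stays at least $C_1/2$ for $|h|<\eta_0$, and for each fixed $t$ there are infinitely many $j$ with $|t|/a^{2j}<\eta_0$. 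Your argument buys an explicit closed form through $\Gamma(\alpha-i\lambda)$ that works for every $\rho>1$ with no largeness condition, together with a quantitative decay $|\varepsilon_j(t)|\lesssim|t|\rho^{-2j}$; the paper's argument is more elementary in that it avoids the complex Gamma function and uses only real, compactness-type reasoning. Both proofs rest on the same structural idea---an $L^\infty$ function that is an eigenfunction (up to sign) for the dyadic rescaling hidden in the Poisson subordination---and both conclude with $v_j=(-1)^{j+1}$, $a_j=\rho^j$.
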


This last theorem also says that the operator $T^*$ is essentially bigger than the operator $\P^*f(t) = \sup_\tau \P^{1/2}_\tau f(t)$ which is bounded in $L^p(\real, \omega), 1<p< \infty$, and in $L^\infty(\mathbb{R})$, see  \cite{Bernardis}.

On the other hand,  if $f= \chi_{(0,1)}$ and $\mathcal{H}$ is the Hilbert transform, it is easy to see that   $\displaystyle  \frac1{r} \int_{-r}^0 \mathcal{H}(f)(x)dx\  \sim \log\frac{e}{r}$  as $ r \to 0^+$. In general, this is the growth of a singular integral applied to a bounded function  at the origin. The following theorem shows that the growth of the function  $T^*f$ for  bounded function $f$  at the origin   is of the same order of a singular integral operator.

\begin{thm} \label{Thm:GrothLinfinity}
\begin{enumerate}[(a)]
\item  Let   $\{v_j\}_{j\in \mathbb Z}\in l^p(\mathbb Z)$ for some $1 \le p\le \infty.$ For every $f\in L^\infty(\mathbb{R})$ with support in the unit ball $B=B(0, 1)$, for any ball $B_r\subset B$ with $2r<1$, there exists a constant $C>0$ such that
    $$\frac{1}{|B_r|} \int_{B_r} \abs{T^* f (t)} dt\leq C\left(\log \frac{2}{r}\right)^{1/p'}\norm{v}_{l^p(\mathbb Z)}\|f\|_{L^\infty(\mathbb R)}.$$
\item When $1< p<\infty$, for any $\varepsilon>0$, there exist a $\rho$-lacunary sequence  $\{a_j\}_{j\in \mathbb Z}$,  a sequence $\{v_j\}_{j\in \mathbb Z}\in \ell^p(\mathbb Z)$ and a function  $f\in L^\infty(\mathbb{R})$ with support in the unit ball $B=B(0, 1),$ satisfying the following statement: for any ball $B_r\subset B$ with $2r<1$, there exists a constant $C>0$ such that
    $$\frac{1}{|B_r|} \int_{B_r} \abs{T^* f (t)} dt\geq C\left(\log \frac{2}{r}\right)^{1/(p-\varepsilon)'}\norm{v}_{l^p(\mathbb Z)}\|f\|_{L^\infty(\mathbb R)}.$$
\item When $p=\infty,$ there exist  a $\rho$-lacunary sequence $\{a_j\}_{j\in\mathbb Z}$, a sequence $\{v_j\}_{j\in \mathbb Z}\in l^\infty(\mathbb Z)$ and $f\in L^\infty(\mathbb{R})$ with support in the unit ball $B=B(0, 1)$, satisfying the following statements: for any  ball $B_r\subset B$ with $2r<1$, there exists a constant $C>0$ such that
    $$\frac{1}{|B_r|} \int_{B_r} \abs{T^* f (t)} dt\geq C\left(\log \frac{2}{r}\right)\norm{v}_{l^\infty(\mathbb Z)}\|f\|_{L^\infty(\mathbb R)}.$$
\end{enumerate}
In the statements above,  $\displaystyle p' = \frac{p}{p-1},$ and if $p=1$, $\displaystyle p'=\infty.$
\end{thm}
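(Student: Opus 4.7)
I split the theorem into the upper bound in (a) and the sharpness examples in (b)--(c).

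For (a), the first step is H\"older's inequality in the summation index: writing $D_j:=\P^\alpha_{a_{j+1}}-\P^\alpha_{a_j}$, one has pointwise $|T_N^\alpha f(t)|\le \|v\|_{\ell^p}\bigl(\sum_j|D_j f(t)|^{p'}\bigr)^{1/p'}$ for every $N$. Taking $\sup_N$, averaging over $B_r$, and using Jensen's inequality (concavity of $x\mapsto x^{1/p'}$ in the outer integration) together with the uniform bound $|D_j f|\le 2\|f\|_\infty$ to convert $|D_j f|^{p'}$ to $|D_j f|$ at a cost of $\|f\|_\infty^{p'-1}$, the claim reduces to the endpoint estimate
$$\frac{1}{|B_r|}\int_{B_r}\sum_{j\in\mathbb Z}|D_j f(t)|\,dt\le C\|f\|_\infty \log(2/r).$$
Indeed $\|f\|_\infty^{1/p}\cdot(\|f\|_\infty\log(2/r))^{1/p'}=\|f\|_\infty(\log(2/r))^{1/p'}$, recovering the exponent $1/p'$.

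The endpoint estimate is the heart of the proof, and I would establish it by viewing $\bigl(K^\alpha_{a_{j+1}}(u)-K^\alpha_{a_j}(u)\bigr)_{j}$ as an $\ell^1$-valued one-sided Calder\'on-Zygmund kernel on $\real$. The $\ell^1$-size bound $\sum_j|K^\alpha_{a_{j+1}}(u)-K^\alpha_{a_j}(u)|\le C/u$ is obtained from the unimodality of $\tau\mapsto K^\alpha_\tau(u)$ (whose maximum in $\tau$ is of order $1/u$), so that the total variation along any $\rho$-lacunary sequence is bounded by twice the maximum; the H\"ormander smoothness is verified from the derivative estimate $|\partial_u K^\alpha_\tau(u)|\le C K^\alpha_\tau(u)/u$. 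These two conditions deliver an $L^\infty\to BMO$ estimate for the scalar sublinear operator $f\mapsto \sum_j|D_j f|$. Combining this with the John-Nirenberg inequality applied between $B_r$ and the reference ball $B$, and controlling the average on $B$ via the compact support of $f$ together with the $L^2$ square-function bound $\|(\sum_j|D_j f|^2)^{1/2}\|_{L^2}\le C\|f\|_{L^2}$ (obtained from Theorem~\ref{thm:Maxi}(a) through Kintchine's inequality), produces the claimed logarithmic bound. The main obstacle here is that $\ell^1$ is not UMD, so the full vector-valued $L^p$-extension is unavailable; one must instead run the argument entirely through the $L^\infty\to BMO$ mechanism and the compact-support reference-ball control.

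For (b) and (c), the strategy is to construct an explicit extremizer. Take $a_j=\rho^j$ and $f=\sum_k\sigma_k\chi_{I_k}$, where $\sigma_k\in\{\pm 1\}$ are signs to be chosen and $I_k$ are disjoint intervals of length $\sim r_k:=\rho^{-2k}$ placed near the origin at distance of the same order, for $k$ running over a window of length comparable to $\log(1/r)$. Using the explicit formula $\int_0^L K^\alpha_\tau(u)\,du=\Gamma(\alpha,\tau^2/(4L))/\Gamma(\alpha)$ (with $\Gamma(\alpha,\cdot)$ the upper incomplete gamma), a direct calculation shows that for $t\in B_r$, the contribution $D_j f(t)$ is of order $\sigma_{k(j)}$ exactly when $a_j^2$ matches one of the scales $r_{k(j)}$, with rapid decay off this matching. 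Hence the active set of indices has cardinality $\sim\log(1/r)/\log\rho$ with predictable signs. Choosing $v_j$ to align with these signs---$v_j=\pm 1$ in (c), giving the full $\log(2/r)$ lower bound, and a normalised truncated power sequence $v_j=c(1+|j|)^{-1/p}\chi_{\{\text{active}\}}(j)$ in (b), realising $\|v\|_{\ell^p}=O(1)$---completes the construction. The small $\epsilon$-loss in (b) arises from the marginal over-extension of the active window that is needed to absorb the logarithmic factor produced by the $\ell^p$-normalisation of $v_j$.
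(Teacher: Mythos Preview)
Your reduction via H\"older in $j$ and the pointwise bound $|D_jf|\le 2\|f\|_\infty$ is fine, but the proposed proof of the endpoint estimate
\[
\frac{1}{|B_r|}\int_{B_r}\sum_{j}|D_jf(t)|\,dt\le C\|f\|_\infty\log\frac{2}{r}
\]
does not go through as written. Two concrete problems:
\begin{itemize}
\item The $L^\infty\!\to\!BMO$ step for the $\ell^1$-valued operator still requires, as input, boundedness on some $L^q$; you note that $\ell^1$ is not UMD but do not supply any replacement. Without an a priori $L^q$ bound, the local part of the CZ argument has nothing to feed on.
\item Your proposal to control the reference average $(Sf)_B$ via the square function is in the wrong direction: one has $\sum_j|D_jf|\ge(\sum_j|D_jf|^2)^{1/2}$, so the $L^2$ square-function estimate does not dominate the $\ell^1$ sum. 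In fact, for the oscillating $f$ used in part~(c) one can check $\sum_j|D_jf(0)|=\infty$, so $Sf$ is a genuinely rougher object than the square function.
\end{itemize}
The paper avoids the endpoint altogether. It splits $f=f_1+f_2$ \emph{before} applying H\"older in $j$: for the near part $f_1=f\chi_{B_{2r}}$ it uses directly the $L^2$-boundedness of $T^*$ (not of $S$), which gives $\frac{1}{|B_r|}\int_{B_r}|T^*f_1|\le C\|f\|_\infty$. For the far part $f_2$ it applies H\"older in $j$ \emph{inside} the convolution integral, using the kernel itself as the measure, then Fubini and the pointwise $\ell^1$ kernel bound $\sum_j|K^\alpha_{a_{j+1}}(s)-K^\alpha_{a_j}(s)|\le C/s$; since for $t\in B_r$ and $s\in\operatorname{supp}f_2$ one has $r\le|t-s|\le 2$, the integral $\int\frac{1}{|t-s|}|f_2(s)|^{p'}\,ds$ produces exactly $(\log(2/r))$, and the outer $1/p'$ power gives the stated exponent.

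\textbf{Parts (b) and (c).} The broad picture (lacunary $a_j=a^j$, sign-alternating $f$ built from dyadic-type blocks accumulating at the origin, $v_j$ aligned with the resulting signs of $D_jf$) matches the paper. But your choice in (b), $v_j=c(1+|j|)^{-1/p}\chi_{\{\text{active}\}}(j)$, is not admissible: if the active window depends on $r$ then $\{v_j\}$ is not a fixed sequence, contradicting the theorem; if the window is fixed, the estimate fails for small $r$; and the untruncated $|j|^{-1/p}$ is not in $\ell^p$. The paper instead takes $v_j=(-1)^{j+1}(-j)^{-1/(p-\varepsilon)}$ for $j\le 0$, which lies in $\ell^p$ precisely because $p/(p-\varepsilon)>1$, and then $\sum_{j=J_0}^{0}|v_j|\sim|J_0|^{1/(p-\varepsilon)'}\sim(\log(2/r))^{1/(p-\varepsilon)'}$. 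This is the true source of the $\varepsilon$-loss: the borderline decay $|j|^{-1/p}$ just fails $\ell^p$, so one must step back to exponent $1/(p-\varepsilon)$.
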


Some related results  about the local behavior of variation operators can be found in \cite{BCT}. One dimensional results about the variation of some convolutions operators can be found in \cite{MTX}.

\medskip

\medskip

The organization of the paper is as follows.  In Section \ref{Sec:mainproof}, we will get the kernel estimates to see that the kernel $K_N^\alpha$ is a vector-valued Caldr\'on-Zygmund kernel, and then we can get the uniform boundedness of $T_N^\alpha$, i.e.  Theorem \ref{Thm:PoissonLpOne}. And with a Cotlar's inequality, we can get the proof of Theorem \ref{thm:Maxi}  in Section \ref{Sec:max}. In Section \ref{sec:equinfty}, we will give the proof of Theorem \ref{Thm:infny} and  Theorem \ref{Thm:GrothLinfinity}.

\medskip
Throughout this paper, the symbol $C$ in an inequality always denotes a constant which may depend on some indices, but never on the functions $f$ in consideration.

\medskip

\section{Uniform $L^p $ boundedness of the operators $T_N^\alpha$} \label{Sec:mainproof}
We shall need the following lemma.
\begin{lem}\label{ComplexIntegral}
 Let $0<\a<1$. Then for any complex number $z_0$ with $Re z_0 > 0$ and $\displaystyle |\arg z_0 |\leq {\pi}/{4}$, we have
 $$ \int_0^\8  e^{-z_0 u} e^{-\frac{z_0}{u} }\,\frac{du}{u^{\alpha}}= z_0^{1-\a}\int_{0}^{\8}  \frac{e^{-r}e^{- z_0^2/r}}{r^{2-\a}} dr.$$
 \end{lem}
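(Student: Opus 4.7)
The core idea is the substitution $s = z_0/u$. For real $z_0 > 0$ it is elementary: $u = z_0/s$ yields $du = -z_0 s^{-2}\,ds$, whence $u^{-\alpha}\,du = -z_0^{1-\alpha}\,s^{\alpha-2}\,ds$, while $z_0 u = z_0^2/s$ and $z_0/u = s$. Reversing the limits absorbs the minus sign and gives the claimed identity immediately. My plan is to extend this to complex $z_0$ by a contour-deformation argument.

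For complex $z_0$ with $\theta := \arg z_0 \in [-\pi/4, \pi/4]$, the substitution $s = z_0/u$ is still valid, but the image of the positive $u$-axis is now the ray $L_\theta = \{r e^{i\theta} : r>0\}$. I would thus obtain
\[
\int_0^\infty e^{-z_0 u - z_0/u}\,\frac{du}{u^{\alpha}} \;=\; z_0^{1-\alpha}\int_{L_\theta} e^{-s - z_0^2/s}\,s^{\alpha-2}\,ds,
\]
and then deform $L_\theta$ to the positive real axis. Concretely, I would apply Cauchy's theorem to the closed contour consisting of $[\epsilon, R]$, the outer arc $\{R e^{i\varphi} : \varphi \in [0,\theta]\}$, the segment from $R e^{i\theta}$ down to $\epsilon e^{i\theta}$, and the inner arc $\{\epsilon e^{i\varphi} : \varphi \in [\theta, 0]\}$. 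Using the principal branch of $s^{\alpha-2}$, the integrand is holomorphic in the (closed) sector, so the closed-contour integral vanishes identically.

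The main obstacle is showing that both arc integrals tend to zero in the limits $R\to\infty$ and $\epsilon\to 0^+$. On the outer arc, $|e^{-s}|\le e^{-R\cos\varphi}\le e^{-R/\sqrt{2}}$ (since $|\varphi|\le\pi/4$), and $|e^{-z_0^2/s}|\le 1$ because $|\arg(z_0^2/s)|\le\pi/2$ forces $\mathrm{Re}(z_0^2/s)\ge 0$; combined with $|s^{\alpha-2}|=R^{\alpha-2}$ and arc length $\le R\pi/4$, the outer contribution is $O(R^{\alpha-1}e^{-R/\sqrt{2}})\to 0$. On the inner arc, $|s^{\alpha-2}|=\epsilon^{\alpha-2}$ is non-integrable, but this is defeated by $|e^{-z_0^2/s}|\le e^{-c|z_0|^2/\epsilon}$ (super-exponential decay as $\epsilon\to 0^+$), which is valid as long as $|\theta|<\pi/4$ \emph{strictly}, since then $\cos(2\theta-\varphi)$ stays uniformly positive on the arc. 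The boundary case $|\theta|=\pi/4$ is then recovered by continuity in $z_0$: both sides are holomorphic on the open sector $\{|\arg z_0|<\pi/4\}$ and agree on $(0,\infty)$, and an integration by parts in $r$ on the right-hand side shows that it depends continuously on $z_0$ up to the closed sector (the oscillation of $e^{-z_0^2/r}$ near $r=0$ tames the $r^{\alpha-2}$ singularity when $\mathrm{Re}(z_0^2)=0$), so passing to the limit $\theta\to\pm\pi/4$ completes the proof.
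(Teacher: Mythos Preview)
Your argument is correct and follows the same overall strategy as the paper --- Cauchy's theorem on a sector, with arc estimates --- but the two implementations are mirror images of each other. The paper keeps the integral on the $u$-side, rotates the contour from the positive axis to the ray $\mathrm{ray}_{\varphi_0}$, and only then performs the real substitution $u=sz_0$ (followed by $r=1/s$); you substitute $s=z_0/u$ first and rotate the resulting contour in the $s$-plane back to the real axis. Because of this duality, the delicate boundary case $|\arg z_0|=\pi/4$ surfaces at opposite arcs: for the paper it is the large arc $C_R$ (since $\arg(uz_0)$ can reach $\pi/2$), and they handle it by the Jordan-type inequality $\sin\omega\ge 2\omega/\pi$; for you it is the small arc (since $\arg(z_0^2/s)$ can reach $\pi/2$), and you sidestep the direct estimate via a continuity argument, made rigorous by one integration by parts on the right-hand side. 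Both routes are valid; your continuity trick is arguably cleaner, while the paper's direct estimate is more self-contained at the boundary.
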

\begin{proof}
Let $ \varphi_0= \arg z_0$. Assume that $ 0\leq \varphi_0 \leq \pi/4$. The case
$-\pi/4 \leq \varphi_0 \leq 0$ is completely analogous. Define the ray in the complex plane
$$\mathrm{ray}_{\f_0}:=\{z=re^{i\varphi_0}:0<r<\8\}.$$
And then let $\mathcal{C}$ denote the sector in the real part of the complex plane, with $0\leq \arg z \leq \f_0 $ but truncated at  $c_\e: |z|=\e$ and $C_R: |z|=\e$. In fact, the boundary of $\mathcal{C}$ consists four parts: $C_\e$, $C_R$, $\mathrm{ray}_{\varphi_0}$ and positive half part of the real line.

Let us consider the complex function
$$F(u)=\frac{e^{-z_0/u}e^{-uz_0}}{u^\a},$$
which is holomorphic function when $u \neq 0$. Thus, by the Cauchy theorem, we have $\displaystyle \int_{\mathcal{C }} F(u)du=0 $. We first calculate
 \begin{align*}
 \abs{ \int_{C_\e}  F(u) du} & =\abs{ \int_0^{\f_0} \frac{e^{-z_0/ (e^{i\theta}\e)}
                           e^{-z_0 \e e^{i\theta}}}{\e^\a e^{i\a \theta} } i\e e^{i\theta} d\theta}
                           =\abs{ \int_0^{\f_0} \frac{e^{-|z_0|e^{i (\f_0 -\theta)}/\e}
                           e^{-|z_0|\e e^{i (\f_0+\theta)}}}{\e^\a e^{i\a \theta} } i \e e^{i\theta} d\theta}.
 \end{align*}
Since $\f_0 <{\pi}/{4}$, $\f_0-\theta<{\pi}/{2}$ and $\f_0+\theta<{\pi}/{2}$.  Hence
\begin{align*}
 \abs{ \int_{C_\e} F(u) du} & \leq \e^{1-\a} \int_0^{\f_0} e^{-|z_0| \cos(\f_0 -\theta)/\e}
                           e^{-|z_0|\e\cos(\f_0 +\theta)} d \theta\to 0,
\end{align*}
as $\e\to 0$. Similarly, along the curve $C_R$, we have
\begin{align*}
 \abs{ \int_{C_\e} F(u) du} & \leq \int_0^{\f_0} e^{-|z_0| \cos(\f_0 -\theta)/R}
                           e^{-|z_0|R\cos(\f_0 +\theta)}R^{1-\a}d\theta.
\end{align*}
If $\f_0<{\pi}/{4}$,
\begin{align*}
 \abs{\int_{C_R}  F(u) du} &\leq e^{-C_{z_0}  R^2}R^{1-\a} \int_0^{\f_0} e^{-\cos(\f_0-\theta)}d \theta\to 0,
\end{align*}
as $R\to \8$. But for the case $\f_0={\pi}/{4}$,  $\f_0+\theta$ can be ${\pi}/{2}$, then we can not take the limit as above. However, we have
\begin{align*}
 \abs{\int_{C_R} F(u) du} & \leq \int_0^{\frac{\pi}{4}} e^{-|z_0| \cos(\frac{\pi}{4} -\theta)/R}
                           e^{-|z_0|R\cos(\frac{\pi}{4} +\theta)}R^{1-\a}d\theta\\
                           & \leq \int_0^{\frac{\pi}{4}} e^{-|z_0|R\cos(\frac{\pi}{4} +\theta)}R^{1-\a}d\theta\leq \int_0^{\frac{\pi}{4}} e^{-|z_0|R\sin(\frac{\pi}{4}-\theta)}R^{1-\a}d\theta\\
                           & \leq \int_0^{\frac{\pi}{4}} e^{-|z_0|R\sin\omega}R^{1-\a}d\omega\leq \int_0^{\frac{\pi}{4}} e^{-|z_0|\frac{2}{\pi}R \omega}R^{1-\a}d\omega,
\end{align*}
where we have changed variable $\displaystyle \omega= {\pi}/{4}-\theta$ and used the inequality $\displaystyle{2\omega}/{\pi} \leq \sin \omega$. Thus we have
$$ \abs{ \int_{C_R} F(u) du} \leq \frac{\pi}{2|z_0|} R^{-\a}\int_0^{\8} e^{-u} d u\leq C R^{-\a}\to 0, \quad R\to \8.$$
Therefore, we conclude that $\displaystyle \abs{ \int_{C_R} F(u) du}=0$ for $ |\arg z |\leq {\pi}/{4}$.

\vspace{0.5em}

At last, by the Cauchy theorem, we then get
\begin{align*}
 \int_0^\8 F(u) du &= \int_{\mathrm{ray}_{\f_0}} F(u) du.
\end{align*}
Taking $u= sz_0$, we have
\begin{align*}
 \int_0^\8 F(u) du = \int_{\mathrm{Ray}_{\f_0}} \frac{e^{-1/s}e^{-s z_0^2}}{s^\a z_0^\a} z_0ds = z_0^{1-\a}\int_{0}^{\8}  \frac{e^{-r}e^{- z_0^2/r}}{r^{2-\a}} dr.
\end{align*}
Then this lemma is completely proved.
\end{proof}

\begin{remark}\label{Rem:ParaPoisson}
Notice that the  integral
$$\frac{y^{2s}}{4^s\Gamma(s)}\int_0^\infty e^{-y^2/(4\tau)}e^{-\tau(i\rho+\lambda)}\,\frac{d\tau}{\tau^{1+s}},
\quad\rho\in\real,~\lambda\ge0,~0<s<1.$$
is absolutely convergent.
\end{remark}

\subsection{Uniform $L^2$-boundedness}
\

It is known that, see \cite{Bernardis}, the Fourier transform of $\P^\alpha_\tau f$ is
\begin{equation*}\label{Fourier}
\widehat{\P^\alpha_\tau f}(\rho) = \frac1{\Gamma(\alpha)} \int_0^\infty e^{-r} e^{-i\rho\tau^2/4r} \hat{f}(\rho)\frac{dr}{r^{1-\alpha}}.
\end{equation*}
By $\widehat{f}(\rho)$ we denote the Fourier transform of the function $f$, that is,
$$ \widehat{f}(\rho) = \frac{1}{(2\pi)^{1/2}}\int_{\mathbb{R}}f(x)e^{-ix\rho} \,dx,\quad \rho\in\mathbb{R}.$$

\begin{thm}\label{Thm:L2EstimateOne}
There is a constant $C$, depending  on $\alpha$ and $\norm{v}_{l^\infty(\mathbb Z)}$, such that
 $$\sup_N \|T_N^\a f \|_{L^2(\real)}\leq C \|f \|_{L^2(\real)}.$$
\end{thm}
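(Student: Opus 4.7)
The strategy is to invoke Plancherel's theorem and reduce the $L^2$ bound to a uniform $L^\infty$ bound on the Fourier multiplier of $T_N^\alpha$. From the formula recalled in the excerpt, $\widehat{\P_\tau^\alpha f}(\rho) = m(\tau,\rho)\hat f(\rho)$ with $m(\tau,\rho) = \Phi(\tau^2\rho)$, where
\[
\Phi(u) := \frac{1}{\Gamma(\alpha)}\int_0^\infty e^{-r}e^{-iu/(4r)} r^{\alpha-1}\,dr,
\]
so $\widehat{T_N^\alpha f}(\rho) = m_N(\rho) \hat f(\rho)$ with
\[
m_N(\rho) := \sum_{j=N_1}^{N_2} v_j\bigl(\Phi(a_{j+1}^2\rho) - \Phi(a_j^2\rho)\bigr).
\]
By Plancherel it suffices to show $\sup_{N,\rho}|m_N(\rho)| \le C\norm{v}_{\ell^\infty(\ent)}$, with $C$ independent of $N$.

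My plan is to prove that $\Phi$ belongs to $BV(\real)$. Once this is established, since $\{a_j^2\rho\}_{j\in\ent}$ is monotone (as $\{a_j\}$ is lacunary increasing and $\rho$ has fixed sign), the triangle inequality yields
\[
\sum_{j\in\ent}\abs{\Phi(a_{j+1}^2\rho) - \Phi(a_j^2\rho)} \le \int_{-\infty}^\infty|\Phi'(u)|\,du,
\]
uniformly in $\rho$, and hence $|m_N(\rho)| \le \norm{v}_{\ell^\infty(\ent)}\norm{\Phi'}_{L^1(\real)}$.

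To control $\Phi'$, I would differentiate under the integral sign to get
\[
\Phi'(u) = -\frac{i}{4\Gamma(\alpha)}\int_0^\infty e^{-r}e^{-iu/(4r)} r^{\alpha-2}\,dr,
\]
which is only conditionally convergent at $r=0$. I would then apply Lemma \ref{ComplexIntegral} with $z_0^2 = iu/4$, i.e.\ $z_0 = \tfrac12|u|^{1/2}e^{\pm i\pi/4}$ (sign matching $\operatorname{sign}(u)$), which satisfies the sector condition $|\arg z_0| = \pi/4$, to obtain
\[
\Phi'(u) = -\frac{i z_0^{\alpha-1}}{4\Gamma(\alpha)}\int_0^\infty e^{-z_0(u'+1/u')}(u')^{-\alpha}\,du'.
\]
The new integrand decays \emph{genuinely} exponentially at $u'\to 0$ and $u'\to\infty$, since $\operatorname{Re} z_0 = |z_0|/\sqrt{2}\asymp|u|^{1/2}$. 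A rescaling $u' = v/\operatorname{Re} z_0$ followed by a routine Laplace-type estimate of the resulting integral yields
\[
|\Phi'(u)| \le C|u|^{\alpha-1}\ \text{ for } |u|\le 1,\qquad |\Phi'(u)|\le C|u|^{\alpha/2-3/4} e^{-c|u|^{1/2}}\ \text{ for } |u|\ge 1,
\]
both of which are integrable (the first because $\alpha>0$, the second because of the exponential factor), so $\Phi\in BV(\real)$ and the theorem follows.

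The main obstacle is converting the oscillatory integral defining $\Phi'$ into one amenable to a pointwise size estimate: the absolute-value bound on the original integral diverges at $r=0$, so the oscillation of $e^{-iu/(4r)}$ must be used. Lemma \ref{ComplexIntegral} is tailor-made for this via a contour rotation to the ray of argument $\pm\pi/4$, where $e^{-iu/(4r)}$ becomes a genuinely decaying exponential; because the sector condition is saturated ($|\arg z_0|=\pi/4$), the delicate limit argument carried out in the proof of Lemma \ref{ComplexIntegral} is indispensable here.
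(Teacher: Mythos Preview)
Your proposal is correct and follows essentially the same route as the paper: both use Plancherel, bound the multiplier by $\int_0^\infty|\partial_\tau m(\tau,\rho)|\,d\tau$ (equivalently, via the change of variable $u=\tau^2\rho$, your $\int_\real|\Phi'(u)|\,du$), and invoke Lemma~\ref{ComplexIntegral} at the critical angle $|\arg z_0|=\pi/4$ to convert the oscillatory integral defining the derivative into an absolutely convergent one. Your packaging via the scaling relation $m(\tau,\rho)=\Phi(\tau^2\rho)$ and the explicit pointwise bounds on $\Phi'$ is slightly cleaner than the paper's direct double-integral computation, but the substance is identical.
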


\begin{proof}
Let $f\in L^2(\real)$. Using  the Plancherel theorem, we have
\begin{align*}
 \norm{T_N^\a f }_{L^2(\real)} & = \norm{\sum_{j=N_1}^{N_2} v_j(\P_{a_{j+1}}^\a f -\P_{a_j}^\a f)}_{L^2(\real)} \leq  C\norm{v}_{l^\infty(\mathbb Z)}
   \norm{\sum_{j=-\infty}^{\infty} \int_{a_j}^{a_{j+1}} \abs{\partial_\tau \widehat{\P_{\tau}^\a f }} d\tau}_{L^2(\real)}.
\end{align*}
Observe that,
\begin{align*}
\partial_\tau  \widehat{\P_{\tau}^\a f }(\rho) &
     = C \partial_\tau \int_0^\8 e^{-r} e^{-\frac{\tau^2}{4r}(i\rho)}\widehat{f} (\rho)
       \,\frac{dr}{r^{1-\alpha}}
    = C  \int_0^\8 e^{-r}\tau (i\rho) e^{-\frac{\tau^2}{4r}(i\rho)}\widehat{f} (\rho)
       \,\frac{dr}{r^{2-\alpha}}.
\end{align*}
Note that the Fourier transform above is well defined, see Remark \ref{Rem:ParaPoisson}. Then we deduce that
\begin{align*}
 \norm{T_N^\a f }_{L^2(\real)} & \leq  C
   \norm{  \widehat{f} (\rho) \int_{0}^\8 \abs{\int_0^\8 e^{-r}\tau (i\rho) e^{-\frac{\tau^2}{4r}(i\rho)}\,\frac{dr}{r^{2-\alpha}}  } d\tau}_{L^2(\real)}.
\end{align*}
Changing variable $z_0= \tau \sqrt{i\rho}$, by using Lemma \ref{ComplexIntegral} , we have
\begin{align*}
\Big| \int_{0}^\8\abs{ \int_0^\8 e^{-r}\tau (i\rho) e^{-\frac{\tau^2}{4r}(i\rho)}\,\frac{dr}{r^{2-\alpha}}} d\tau
& = \int_{0}^\8 \abs{\int_0^\8  e^{-r}z_0 e^{-\frac{z_0^2}{4r} }
       \,\frac{dr}{r^{2-\alpha}}   }dz_0 \\
    &= 2^{1-\alpha}\int_{0}^\8 \abs{z_0^\a \int_0^\8  e^{-\frac{z_0}{2u}}e^{-\frac{z_0}{2}u} \frac{du}{u^\a}  }dz_0.
\end{align*}
 Since $\abs{\arg z_0}= {\pi}/{4}$, we have $|e^{-z_0/(2u)}| \leq e^{-c|z_0|/u} $ and $|e^{-z_0 u/2}| \leq e^{-c|z_0| u}$, where $c={ \sqrt{2}/ {4}}$. Then
\begin{align*}
&\abs{\int_{0}^\8 z_0^\a \int_0^\8  e^{-z_0/u}e^{-z_0 u} \frac{du}{u^\a}  dz_0}  \leq
  \int_{0}^\8 |z_0|^\a \int_0^\8  e^{-c|z_0|/u}e^{-c|z_0| u} \frac{du}{u^\a} dz_0\\
&\leq \int_{0}^\8 |z_0|^{2\a-1}\int_0^\8  e^{-c|z_0|^2/v}e^{-c v} \frac{d v}{v^\a} dz_0
  = \int_{0}^\8 |\sqrt{i\rho}|^{2\a}\,\tau^{2\a-1}\int_0^\8  e^{-c(|\sqrt{i\rho}|\tau)^2/v}e^{-c v} \frac{d v}{v^\a} d\tau\\
&    = \int_{0}^\8\int_0^\8 (|\sqrt{i\rho}|\tau)^{2\a-1} e^{-c(m\tau)^2/v} d(|\sqrt{i\rho}|\tau) e^{-c v} \frac{d v}{v^\a}\\
&   = \int_{0}^\8\int_0^\8  \tau^{2\a-1} e^{-c\tau^2/v} d\tau e^{-c v} \frac{d v}{v^\a}
\leq C \int_{0}^\8   e^{-c v} d v \leq C.
\end{align*}
Then the proof of the theorem is complete.
\end{proof}

\subsection{ Uniform $L^p$-boundedness}

\

Let us come back to the definition of the operators $T_N^\alpha$, see (\ref{Equ:FinSquareFun}). By using the formula  (\ref{Formu:subordination}), we have

\begin{align*}
T_N^{\a} f(t) &=\sum_{j=N_1}^{N_2} v_j(\P_{a_{j+1}}^\a f(t)-\P_{a_j}^\a f(t)) \\
     &= \frac{1}{4^\alpha\Gamma(\alpha)}  \sum_{j=N_1}^{N_2}v_j \int_0^{+\infty} \frac{ a_{j+1}^{2\a}e^{-a_{j+1}^2/(4 s)}-a_j^{2\a} e^{-a_j^2/(4 s)}        }{s^{1+\a}} f(t-s)~ds\\
     &=\int_0^{+\infty} K_N^\alpha(s) f(t-s)~ds=\int_{-\infty}^{t} K_N^\alpha(t-s) f(s)~ds,
\end{align*}
where
\begin{equation}\label{kernel1dim}K_N^\a(s)=\frac{1}{4^\alpha\Gamma(\alpha)} \sum_{j=N_1}^{N_2}v_j \frac{ a_{j+1}^{2\a}e^{-a_{j+1}^2/(4 s)}-a_j^{2\a} e^{-a_j^2/(4 s)}}{s^{1+\a}}.
\end{equation}

The kernel $K_N^\alpha(s)$ is supported in $(0, +\infty).$ Our study of $T^\alpha_N$ will be  related to the one-sided Calder\'on-Zygmund operators. In particular,
we shall look for Lebesgue estimates with absolute continuous measures  $w(x)dx$, where $w$ is a weight in any of the classes $A_p^{\pm}$ defined by E. Sawyer, see \cite{Sawyer}. This classes were introduced in relation with the boundedness of
the one-sided Hardy-Littlewood maximal operator $\M^-$ defined by
 \begin{equation*}\label{equ:oneMax}
\M^- f(t)=\sup_{\varepsilon>0}\frac{1}{\varepsilon}\int_{-\varepsilon}^0\abs{f(t+s)}ds.
\end{equation*}  We recall the  results that we shall use related with  weights for $\M^-:$
\begin{enumerate}[(1)]

\item  The operator $\M^-$ is of weak type $(1,1)$ with respect to the measure $\omega(t)dt$ if and only if $\omega\in A_1^-$, i.e., there exists $C$ such that \begin{equation}\label{A1+} \M^+\omega\le C\omega\ \  a.e., \end{equation}  where $\M^+$ is the right-sided Hardy Littlewood maximal operator defined as $$\displaystyle \M^+f(t)=\sup_{\varepsilon>0}{1\over \varepsilon}\int_0^\varepsilon \abs{f(t+s)}ds.$$
\item The operator $\M^-$ is bounded in $L^p(\omega)$, $1<p<\infty,$ if and only if $\omega\in A_p^-,$ i.e., if there exists $C$ such that for any three points $a<b<c$
    \begin{equation}\label{Ap+}
     \left(\int_a^b \omega^{1-p'}\right)^{1\over p'}\left(\int_b^c \omega\right)^{1\over p}\le C(c-a),
    \end{equation}
    where $\displaystyle {1\over p}+{1\over p'}=1.$
\end{enumerate}
For more details about the one-sided weights, see \cite{AFM, BLMMDT, Sawyer}.

\begin{thm}\label{Thm:KernelEstOne} Let $K_N^\alpha$ be the kernel defined in (\ref{kernel1dim}).
For any $s\neq 0,$ there exists  constant $C$ depending  on $\alpha$ and $\norm{v}_{l^\infty(\mathbb Z)}$(but not on $N$)  such that
\begin{enumerate}[\indent i)]
  \item $\displaystyle |K_N^\a(s)|\leq \frac{C}{s}$,
    \item  $\displaystyle |\partial_s K_N^\a(s)|\leq \frac{C}{s^2}$.
\end{enumerate}
\end{thm}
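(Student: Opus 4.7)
My plan is to reduce both estimates to a single one-dimensional integral via the fundamental theorem of calculus applied to the lacunary differences, and then to evaluate the resulting integral through the standard substitution $u=r^2/(4s)$ that turns everything into a Gamma-function computation.

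The first step is to introduce the auxiliary function
$$h(r,s)=\frac{r^{2\a}\,e^{-r^2/(4s)}}{s^{1+\a}},\qquad r>0,\ s>0,$$
so that the summand in \eqref{kernel1dim} reads $v_j\bigl(h(a_{j+1},s)-h(a_j,s)\bigr)$. Writing each difference as $\int_{a_j}^{a_{j+1}}\partial_r h(r,s)\,dr$ and using $\|v\|_{\el^\infty}<\8$ together with the fact that the $\rho$-lacunarity makes the intervals $(a_j,a_{j+1})$ pairwise disjoint and contained in $(0,\8)$, I would obtain
$$|K_N^\a(s)|\le \frac{\|v\|_{\el^\infty}}{4^\a\Gamma(\a)}\int_{0}^{\8}|\partial_r h(r,s)|\,dr.$$
A direct computation gives
$$\partial_r h(r,s)=\frac{r^{2\a-1}e^{-r^2/(4s)}}{s^{1+\a}}\left(2\a-\frac{r^2}{2s}\right),$$
and the substitution $u=r^2/(4s)$ transforms $r^{2\a-1}s^{-1-\a}\,dr$ into $2^{2\a-1}s^{-1}u^{\a-1}\,du$, so the integral becomes $Cs^{-1}\int_0^\8 (1+u)u^{\a-1}e^{-u}\,du=C/s$. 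This proves~(i).

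For (ii), I would differentiate the identity above in $s$, bringing the derivative under the sum and under the integral (the absolute convergence is transparent from the Gaussian factor, cf.\ Remark~\ref{Rem:ParaPoisson}), to obtain
$$|\partial_s K_N^\a(s)|\le \frac{\|v\|_{\el^\infty}}{4^\a\Gamma(\a)}\int_{0}^{\8}|\partial_r\partial_s h(r,s)|\,dr.$$
A careful but routine computation, using $r^2/(2s)=2u$, yields
$$\partial_r\partial_s h(r,s)=\frac{2\,r^{2\a-1}e^{-u}}{s^{2+\a}}\,P(u),\qquad P(u)=(u-(1+\a))(\a-u)+u,$$
a polynomial of degree two in $u$. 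With the same substitution $u=r^2/(4s)$, the integral reduces to $Cs^{-2}\int_0^\8 P(u)\,u^{\a-1}e^{-u}\,du$, a finite linear combination of Gamma values, producing the bound $C/s^2$.

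The only non-routine ingredient is the book-keeping at the first step: one has to be sure that the telescoping over $j\in[N_1,N_2]$ followed by the dominated bound by an integral over $(0,\8)$ gives a constant independent of $N$, which is exactly where the disjointness of the lacunary intervals together with $\|v\|_{\el^\infty}<\8$ is used. I do not expect a genuine obstacle here; the computation of $\partial_r\partial_s h$ is the only mildly delicate algebraic step, but the cancellation it produces is not needed for the final majorisation — only the pointwise bound by a polynomial in $u$ times the Gaussian is required.
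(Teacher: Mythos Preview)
Your proposal is correct and follows essentially the same line as the paper: both arguments write each lacunary difference as $\int_{a_j}^{a_{j+1}}\partial_r h(r,s)\,dr$, dominate the sum by the full integral $\int_0^\infty|\partial_r h|\,dr$ (respectively $\int_0^\infty|\partial_r\partial_s h|\,dr$ for part~(ii)), and then reduce to a scaling computation. The only cosmetic difference is that you evaluate the resulting integrals exactly via the substitution $u=r^2/(4s)$ and Gamma values, whereas the paper appeals to the crude pointwise bound $x^A e^{-x/B}\le Ce^{-x/B'}$ (Remark~\ref{Lem:KernelLpEst}) together with the change of variable $u\mapsto u/\sqrt{s}$; for part~(ii) the paper also splits the $s$-derivative by the product rule on $s^{-1-\alpha}$ times the integral rather than computing $\partial_r\partial_s h$ in one stroke, but the content is identical.
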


The proof of Theorem \ref{Thm:KernelEstOne} involves an estimate we will repeat several times, so we formulate it in the following remark.

\begin{remark}\label{Lem:KernelLpEst}
Along the paper, we shall use frequently the estimate  $x^A e^{-x/B}  \le Ce^{-x/B'}$ with $x, A,B,B',  C >0.$
\end{remark}

\begin{proof}[Proof of Theorem \ref{Thm:KernelEstOne}] For $i)$, we have

\begin{align*}
  |K_N^\a (s)| &\leq C\sum_{j=-\8}^{\8} \left|\frac{a_{j+1}^{2\a} e^{-a_{j+1}^2/(4s)}-a_j^{2\a} e^{-a_j^2/(4s)}}{s^{1+\a}}  \right|
  = C\sum_{j=-\8}^{\8} \left| a_{j+1}^{2\a} e^{-a_{j+1}^2/(4s)}-a_j^{2\a} e^{-a_j^2/(4s)}\right| \frac{1}{s^{1+\a}}.
\end{align*}
Observe that, by Remark \ref{Lem:KernelLpEst},
\begin{align*}
&\sum_{j=-\8}^{\8} \left| a_{j+1}^{2\alpha}e^{-a_{j+1}^2/(4s)}-a_j^{2\alpha}e^{-a_j^2/(4s)}\right|
        = \sum_{j=-\8}^{\8} \left|\int_{a_j}^{a_{j+1}}\partial_u \left(u^{2\a} e^{-u^2/(4s)}\right) du\right|\nonumber\\
       & \leq \int_{0}^{\infty}\left|( 2\a u^{2\a-1}-\frac{u^{2\a+1}}{2s})e^{-u^2/(4s)} \right| du     \leq C\int_{0}^{\infty}\left|(u^{2\a-1}+\frac{u^{2\a+1}}{2s})e^{-u^2/(4s)} \right| du\nonumber\\
       &\leq C\sqrt{s}\Big(\int_{0}^{\infty} (\sqrt{s} )^{2\a-1}  \left(\frac{u}{\sqrt{s}} \right)^{2\a-1} e^{-\frac14 \left(u/\sqrt{s}\right)^2} d\frac{u}{\sqrt{s}}\\
       &\quad  + s^{\a-1/2}\int_{0}^{\infty}\left(\frac{u}{\sqrt{s}}\right)^{2\a+1}  e^{-\frac14\left(u/\sqrt{s} \right)^2} d\frac{u}{\sqrt{s}}\Big)\nonumber\\
       &\leq C s^{\a}.\nonumber
\end{align*}
Then
$ \displaystyle
  |K_N^\a (s)| \leq \frac{C}{s}$.  This proves {\it i)}.

For {\it ii)}, we can write
\begin{align*}
 K_N^{\a}(s) &= C\sum_{j=N_1}^{N_2} v_j\left( a_{j+1}^{2\alpha}e^{-a_{j+1}^2/(4s)}-a_j^{2\alpha}e^{-a_j^2/(4s)}\right) \\ &
               = C\sum_{j=N_1}^{N_2} \frac{1}{s^{1+\a} }
                   v_j\int_{a_j}^{a_{j+1}} \left(2\a u^{2\a-1}-\frac{u^{2\a+1}}{2s}\right) e^{-\frac{u^2}{4s}}du.
\end{align*}
The partial derivative $\partial_{s} K_N^\a(s)$ consists two parts. The first part is
\begin{align*}
I &= C\sum_{j=N_1}^{N_2} \frac{1}{s^{1+\a} } v_j \int_{a_j}^{a_{j+1}}\left(\frac{u^{2\a+1}}{2s^2}
      + \left(2\a u^{2\a-1}-\frac{u^{2\a+1}}{2s}\right)\frac{u^2}{4s^2} \right) e^{-\frac{u^2}{4s}}du\\
  &= C\sum_{j=N_1}^{N_2} \frac{1}{s^{1+\a} } v_j \int_{a_j}^{a_{j+1}}\left(\frac{(\a+1)u^{2\a+1}}{2 s^2}
      -\frac{u^{2\a+3}}{8s^3}\right) e^{-\frac{u^2}{4s}}du.
\end{align*}
And the second part is
\begin{align*}
II &= C\sum_{j=N_1}^{N_2} \partial_s\left(\frac{1}{s^{1+\a}} \right)v_j \int_{a_j}^{a_{j+1}} \left(2\a u^{2\a-1}
       -\frac{u^{2\a+1}}{2s}\right) e^{-\frac{u^2}{4s}}du\\
   &= C\sum_{j=N_1}^{N_2}  \left(-\frac{1+\a}{s^{2+\a}}\right)
       v_j\int_{a_j}^{a_{j+1}} \left(2\a u^{2\a-1} -\frac{u^{2\a+1}}{2s}\right) e^{-\frac{u^2}{4s}}du.
\end{align*}
Then by using Remark  \ref{Lem:KernelLpEst} again, we have
\begin{align*}
|I| &\leq C \frac{1}{s^{1+\a} } \int_{0}^{\8} \abs{\frac{(\a+1)u^{2\a+1}}{2 s^2}
      -\frac{u^{2\a+3}}{8s^3}} e^{-\frac{u^2}{4s}}du
 \leq C \frac{1}{s^{1+\a} } s^{\a-1}\leq  \frac{C}{s^2},
\end{align*}
and
\begin{align*}
|II|  &\leq C \Big| \frac{1+\a}{s^{2+\a}} \Big|
       \int_{0}^{\8} \Big| (2\a u^{2\a-1} -\frac{u^{2\a+1}}{2s}) e^{-\frac{u^2}{4s}}du \Big|
      \leq C \frac{s^\a}{s^{2+\a} }\leq \frac{C}{s^2}.
\end{align*}
Combining the estimates $I$ and $II$, we have
\begin{align*}
|\partial_{s} K_N^\a (s) | \leq \frac{C}{s^2}.
\end{align*}
All the estimates above are true uniform for $N$. The proof of the Theorem \ref{Thm:KernelEstOne} is complete.
\end{proof}

  From  Theorems  \ref{Thm:L2EstimateOne}, \ref{Thm:KernelEstOne}, and standard Calder\'on-Zygmund theory, we can get the uniform estimate in $L^p(\real, w)$ ($1 < p< \infty, \ w\in A_p$) of the operators $T_N^\alpha$.
 Here, $A_p$ denotes the classical Muckenhoupt $A_p$ weights, see \cite{RubioRuTo}.
  However, to the one-side nature of the kernel, we can apply Theorem 2.1 in \cite{AFM}  to get  the uniform boundedness in $L^p(\real, w) $  of the operators $T_N^\alpha$ with   $w \in A_p^-$ in the following.

\begin{thm}\label{Thm:PoissonLpOne}
Let $T_N^\a$ be the family of operators defined   in \eqref{Equ:FinSquareFun}, we have the following statements.
\begin{enumerate}[(a)]
    \item For any $1<p<\infty$ and $\omega\in A^-_p$,  there exists a constant $C$ depending  on $p, \alpha$,  $\norm{v}_{l^\infty(\mathbb Z)}$  and $\omega$(not on $N$) such that
 $$\norm{T_N^\alpha f}_{L^p(\mathbb R, \omega)}\leq C\norm{f}_{L^p(\mathbb R, \omega)},$$
 for all functions $f\in L^p(\real, \omega).$
    \item  For any $\omega\in A^-_1$, there exists a constant $C$ depending  on $\alpha$, $\norm{v}_{l^\infty(\mathbb Z)}$ and $\omega$(not on $N$) such that
 $$\omega\left({\{t\in \real}:\abs{T_N^\alpha f(t)}>\lambda\}\right) \le C\frac{1}{\lambda}\norm{f}_{L^1(\mathbb R, \omega)},\quad   \lambda >0,$$
for all functions $f\in L^1(\real, \omega).$
    \item There exists a constant $C$ depending  on $\alpha$ and $\norm{v}_{l^\infty(\mathbb Z)}$(not on $N$)  such that
$$\norm{T_N^\alpha f}_{BMO(\mathbb R)}\leq C\norm{f}_{L^\infty(\mathbb R)},$$
for all functions $f\in L^\infty(\real).$
\item There exists a constant $C$ depending on $\alpha$ and $\norm{v}_{l^\infty(\mathbb Z)}$(not on $N$) such that
$$\|T_N^\alpha f\|_{BMO(\mathbb R)} \leq C\|f\|_{BMO(\mathbb R)} ,$$
for all functions $f\in BMO(\mathbb R)).$
\end{enumerate}
The constants C appeared above all are independent with $N.$
\end{thm}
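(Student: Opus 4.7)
The central observation is that the uniform $L^2$-bound from Theorem \ref{Thm:L2EstimateOne} together with the uniform kernel estimates $\abs{K_N^\a(s)}\le C/\abs{s}$ and $\abs{\partial_s K_N^\a(s)}\le C/\abs{s}^2$ from Theorem \ref{Thm:KernelEstOne} identify $T_N^\a$ as a one-sided Calder\'on--Zygmund operator in the sense of \cite{AFM}, with all relevant constants independent of $N$. The four statements then follow by applying the one-sided Calder\'on--Zygmund machinery, and the uniform $N$-independence is automatic because it is already built into the hypotheses.

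For parts (a) and (b), I would invoke Theorem 2.1 of \cite{AFM} directly: an $L^2$-bounded one-sided Calder\'on--Zygmund operator is bounded on $L^p(\real,\omega)$ for $\omega\in A_p^-$, $1<p<\8$, and is of weak type $(1,1)$ with respect to any $\omega\in A_1^-$, with constants depending only on the kernel CZ constants, the $L^2$ operator norm, and the $A_p^-$ (resp.\ $A_1^-$) constant of $\omega$. Since all these quantities are uniform in $N$, the desired bounds follow immediately.

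For (c), the $L^\8\to BMO$ estimate, I would carry out the standard CZ argument adapted to the one-sided situation: fix an interval $B=(t_0-r,t_0+r)$ with double $B^*=(t_0-2r,t_0+2r)$, decompose $f=f\chi_{B^*}+f\chi_{(B^*)^c}$, control the local piece by H\"older's inequality and the $L^2$-boundedness, and for the tail subtract the constant $c_B=\int_{(B^*)^c} K_N^\a(t_0-s)f(s)\,ds$ and use the smoothness bound $\abs{K_N^\a(t-s)-K_N^\a(t_0-s)}\le Cr/\abs{t_0-s}^2$ for $t\in B$, $s\notin B^*$. Since $K_N^\a$ is supported in $(0,+\8)$, the tail integral only picks up $s<t_0-2r$, for which $\abs{t_0-s}>r$, so convergence is immediate. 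For (d), the $BMO\to BMO$ estimate, the same decomposition is applied to $f-f_{B^*}$: the constant part is annihilated by oscillation, the local part is controlled via $L^2$ together with the John--Nirenberg bound $\norm{(f-f_{B^*})\chi_{B^*}}_{L^2}\le Cr^{1/2}\norm{f}_{BMO}$, and the tail uses the standard $BMO$ estimate $\int_{(B^*)^c}\abs{f(s)-f_{B^*}}/\abs{t_0-s}^2\,ds\le C\norm{f}_{BMO}/r$.

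The main technical point throughout is the one-sided support of $K_N^\a$, which rules out direct appeal to the classical symmetric Calder\'on--Zygmund theory and forces the lateral framework of Aimar--Forzani--Mart\'in-Reyes; once in that framework, however, the classical arguments transfer essentially verbatim, and since every bound in the chain depends only on kernel constants and an $L^2$-norm that are themselves uniform in $N$, the final estimates in (a)--(d) are automatically uniform in $N$.
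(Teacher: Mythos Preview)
Your proposal is correct and follows exactly the route the paper takes: parts (a) and (b) are obtained by direct appeal to Theorem~2.1 in \cite{AFM}, while (c) and (d) are declared to follow from the standard Calder\'on--Zygmund arguments (the paper simply cites \cite{Xu}); your outline of the local/tail decomposition for (c) and (d) is precisely what that citation encodes. The uniformity in $N$ is handled exactly as you say, being inherited from the uniform $L^2$ bound and the uniform kernel estimates of Theorems~\ref{Thm:L2EstimateOne} and~\ref{Thm:KernelEstOne}.
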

As we have said before the proof of $(a)$ and $(b)$ in  the theorem above is  obtained by using Theorem 2.1 in \cite{AFM}. On the other hand the proof of $(c)$ and $(d)$ are standard in the Calder\'on-Zygmund theory and it can be found in \cite{Xu}.

\medskip

\section{Boundedness of the maximal operator $T^*$}\label{Sec:max}

In this section, we will give the proof of Theorem \ref{thm:Maxi} related to the boundedness of the maximal operator $T^*$. The next proposition,  parallel to  Proposition 3.2 in \cite{BLMMDT}, shows that, without lost of generality, we may assume that
\begin{equation}\label{equ:lacunary}
1<\rho \leq {a_{j+1} \over a_j}\leq \rho^2, \quad j\in \mathbb Z.
\end{equation}

\begin{prop}\label{Prop:lacunary}
Given a $\rho$-lacunary sequence $\{a_j\}_{j\in \mathbb Z}$ and a multiplying sequence $\{v_j\}_{j\in \mathbb Z}\in l^\infty(\mathbb Z)$, we can define a $\rho$-lacunary sequence $\{\eta_j\}_{j\in \mathbb Z}$ and $\{\omega_j\}_{j\in \mathbb Z}\in l^\infty(\mathbb Z)$ verifying the following properties:
\begin{enumerate}[(i)]
\item $1<\rho \leq \eta_{j+1}/\eta_j\leq \rho^2,\quad \quad \norm{\omega_j}_{l^\infty(\mathbb Z)}=\norm{v_j}_{l^\infty(\mathbb Z)}$.
\item For all $N=(N_1, N_2)$ there exists $N'=(N_1', N_2')$ with $T_N^\alpha=\tilde{T}_{N'}^\alpha,$
where $\tilde{T}_{N'}^\alpha$ is the operator defined in \eqref{Equ:FinSquareFun} for the new sequences $\{\eta_j\}_{j\in \ent}$ and $\{\omega_j\}_{j\in \ent}.$
\end{enumerate}
\end{prop}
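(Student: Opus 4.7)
The plan is to subdivide every gap of $\{a_j\}$ whose ratio $a_{j+1}/a_j$ exceeds $\rho^{2}$ by inserting intermediate points at consecutive powers of $\rho$, and then to make the new multiplier sequence $\{\omega_j\}$ constant on each block that arose from a single original gap. Since $T_N^\alpha$ is a $v_j$-weighted telescoping sum, assigning the value $v_j$ to every one of the new indices inside the $j$-th block will force the refined sum to collapse back to the original one.

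Concretely, for each $j\in\ent$ I would let $k_j\ge 1$ be the unique integer with $\rho^{k_j}\le a_{j+1}/a_j<\rho^{k_j+1}$, and insert inside $(a_j,a_{j+1})$ the $k_j-1$ points $a_j\rho,a_j\rho^{2},\ldots,a_j\rho^{k_j-1}$ (none when $k_j=1$). To reindex, set $\psi(0)=0$ and $\psi(j+1)=\psi(j)+k_j$; since $k_j\ge 1$, the map $\psi:\ent\to\ent$ is strictly increasing and $\psi(j)\to\pm\infty$ as $j\to\pm\infty$, so the formula
$$\eta_{\psi(j)+i}=a_j\rho^{i},\qquad 0\le i\le k_j-1,$$
(together with $\eta_{\psi(j+1)}=a_{j+1}$) defines $\eta_m$ unambiguously for every $m\in\ent$. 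The consecutive ratios inside a block equal $\rho$, while the last ratio of block $j$ equals $a_{j+1}/(a_j\rho^{k_j-1})\in[\rho,\rho^{2})$ by the choice of $k_j$; hence $1<\rho\le\eta_{m+1}/\eta_m\le\rho^{2}$ for all $m$, establishing the lacunarity part of $(i)$.

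Next, set $\omega_m=v_j$ whenever $\psi(j)\le m<\psi(j+1)$; the set of values of $\omega$ coincides with that of $v$, so $\|\omega\|_{\ell^\infty(\ent)}=\|v\|_{\ell^\infty(\ent)}$, which is the remaining half of $(i)$. Given any $N=(N_1,N_2)$, put $N'=(\psi(N_1),\psi(N_2+1)-1)$; block-wise telescoping then gives
\begin{align*}
\tilde{T}_{N'}^{\alpha}f(t)
&=\sum_{m=\psi(N_1)}^{\psi(N_2+1)-1}\omega_m\bigl(\P^{\alpha}_{\eta_{m+1}}f(t)-\P^{\alpha}_{\eta_{m}}f(t)\bigr)\\
&=\sum_{j=N_1}^{N_2}v_j\sum_{i=0}^{k_j-1}\bigl(\P^{\alpha}_{\eta_{\psi(j)+i+1}}f(t)-\P^{\alpha}_{\eta_{\psi(j)+i}}f(t)\bigr)\\
&=\sum_{j=N_1}^{N_2}v_j\bigl(\P^{\alpha}_{a_{j+1}}f(t)-\P^{\alpha}_{a_{j}}f(t)\bigr)=T_N^{\alpha}f(t),
\end{align*}
which is $(ii)$. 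The argument is essentially bookkeeping, and the only delicate point is making sure $\psi$ defines a legitimate reindexing of all of $\ent$; this is handled by the fact that each $k_j\ge 1$ forces the cumulative sums defining $\psi$ to be strictly monotone and unbounded in both directions.
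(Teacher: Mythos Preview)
Your argument is correct and follows essentially the same route as the paper's own proof: both insert the intermediate points $a_j\rho, a_j\rho^{2},\ldots$ between $a_j$ and $a_{j+1}$ and carry the multiplier $v_j$ across the resulting block so that the refined sum telescopes back to the original. Your version is in fact tidier, since the explicit reindexing map $\psi$ and the choice $N'=(\psi(N_1),\psi(N_2+1)-1)$ make the block-wise telescoping transparent, whereas the paper's description of $J(j)$ and $M$ is more informal.
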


\begin{proof} We  follow closely the ideas in the proof
of Proposition 3.2 in \cite{BLMMDT}. We include it at here for completeness.

Let $\eta_0=a_0$, and let us construct $\eta_j$ for positive $j$ as follows (the argument for
negative $j$ is analogous). If $\rho^2\ge {a_1/ a_0}\ge \rho,$ define $\eta_1=a_1$. In the opposite case where
$a_1/a_0>\rho^2,$  let $\eta_1=\rho a_0.$ It verifies $\rho^2\ge \eta_1/\eta_0=\rho\ge \rho.$ Further, $a_1/\eta_1\ge \rho^2 a_0/\rho a_0=\rho.$  Again, if $a_1/\eta_1\le \rho^2,$ then $\eta_2=a_1.$ If this is not the case, define $\eta_2=\rho^2 a_0\le a_1$.  By
the same calculations as before, $\eta_0, \eta_1, \eta_2$ are part of a lacunary sequence satisfying \eqref{equ:lacunary}.
To continue the sequence, either $\eta_3=a_1$ (if $a_1/ \eta_2\le \rho^2$) or $\eta_2=\rho^3\eta_0$ (if $a_1/\eta_2>\rho^2$). Since $\rho>1,$ this process ends at some $j_0$ such that $\eta_{j_0}=a_1.$ The rest of the elements $\eta_j$
are built in the same way, as the original $a_k$ plus the necessary terms put in between two
consecutive $a_j$ to get \eqref{equ:lacunary}.

Let $J(j)=\{k:a_{j-1}<\eta_k\le a_j\}$, and $\omega_k=v_j$ if $k\in J(j)$. Then
\begin{equation*}
 v_j(\P_{a_{j+1}}^\a f(t)-\P_{a_j}^\a f(t))=\sum_{k\in J(j) }\omega_k(\P_{a_{k+1}}^\a f(t)-\P_{a_k}^\a f(t)).
\end{equation*}
If $M=(M_1, M_2)$ is the number such that $\eta_{M_2+1}=a_{N_2+1}$ and $\eta_{M_1}=a_{N_1}$, then we get
\begin{equation*}
 T_N^\a f(t)=\sum_{j=N_1}^{N_2} v_j(\P_{a_{j+1}}^\a f(t)-\P_{a_j}^\a f(t))=\sum_{k=M_1}^{M_2} \omega_k(\P_{\eta_{k+1}}^\a f(t)-\P_{\eta_k}^\a f(t))= \tilde{T}_M^\a f(t),
\end{equation*}
where $\tilde{T}_M^\a$  is the operator defined in \eqref{Equ:FinSquareFun} related with sequences $\{\eta_k\}_{k\in \mathbb Z}$, $\{\omega_k\}_{k\in \mathbb Z}$, $\alpha$ and $M=(M_1, M_2)$.
\end{proof}

It follows from this proposition that it is enough to prove all the results of this article
in the case of a $\rho$-lacunary sequence satisfying \eqref{equ:lacunary}. For this reason, in the rest of the article
we assume that  $\{a_j\}_{j\in \mathbb Z}$  satisfies \eqref{equ:lacunary} without saying it explicitly.

In order to prove Theorem \ref{thm:Maxi}, we need a Cotlar's type inequality to control the operator $T^*$ by some one-sided Hardy-Littlewood maximal operators.

For any $M\in \mathbb Z^+,$ let\\
$$T_M^*f(t)=\sup_{-M\le N_1<N_2\le M}\abs{T_N^\alpha f(t)},\quad -\infty<t<+\infty.$$
\begin{thm}\label{Thm:Maximalcontrol}
For each $q\in (1, +\infty),$ there exists a constant $C$ depending  on $q,$ $\norm{v}_{l^\infty(\mathbb Z)}$, $\alpha$ and $\rho$  such that  for every $M\in \mathbb Z^+$,
\begin{equation*}
T_M^*f(t)\le C\left\{\M^- (T_{-M, M}^\alpha f)(t)+\M^-_q f(t)\right\}, \quad   -\infty<t<+\infty,
\end{equation*}
where
$$\M^-_qf(t)=\sup_{\varepsilon>0} \left(\frac{1}{\varepsilon}\int_{-\varepsilon}^0\abs{f(t+s)}^qds\right)^{1\over q}.$$
\end{thm}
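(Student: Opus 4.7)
The plan is to mimic the classical Cotlar inequality for truncated Calder\'on--Zygmund operators, adapted to the two-end truncation of our family. The three ingredients are the algebraic decomposition $T_N^\alpha = T_{-M,M}^\alpha - T_{-M,N_1-1}^\alpha - T_{N_2+1,M}^\alpha$, the pointwise kernel estimates of Theorem~\ref{Thm:KernelEstOne} supplemented by sharper decay bounds at the extreme scales, and the uniform $L^q$-boundedness from Theorem~\ref{Thm:PoissonLpOne}.

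I would fix $t\in\mathbb{R}$, $M\in\mathbb{Z}^+$, and $N=(N_1,N_2)$ with $-M\le N_1<N_2\le M$; set $L=a_{N_1}^2$, take the auxiliary interval $I=(t-L/4,t)$, and split $f=f_1+f_2$ with $f_1=f\chi_{(t-L/2,t)}$. For every $z\in I$ the identity
\begin{equation*}
T_N^\alpha f(t) = T_{-M,M}^\alpha f(z) + \bigl[T_{-M,M}^\alpha f(t)-T_{-M,M}^\alpha f(z)\bigr] - T_{-M,N_1-1}^\alpha f(t) - T_{N_2+1,M}^\alpha f(t)
\end{equation*}
holds. Averaging in $z$, the first term is bounded by $\M^-(T_{-M,M}^\alpha f)(t)$ directly. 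Each remaining piece I plan to bound by $C\M_q^- f(t)$ by splitting against $f_1$ and $f_2$: the $f_1$-contributions are handled by H\"older plus the uniform $L^q$-bound, giving $\frac{1}{|I|}\int_I |Tf_1(z)|\,dz \le \frac{C}{|I|^{1/q}}\|f_1\|_{L^q} \le C\M_q^- f(t)$ since $|\operatorname{supp} f_1|/|I|=2$; the $f_2$-contributions use sharper kernel decay. Specifically, Remark~\ref{Lem:KernelLpEst} yields $|K_N^\alpha(s)|\le Ce^{-L/(8s)}/s$ for $s<L$, a Taylor expansion in $a_j^2/s$ combined with lacunary telescoping gives $|K_{-M,N_1-1}^\alpha(s)|\le CL^\alpha/s^{1+\alpha}$ for $s\ge L$, and Theorem~\ref{Thm:KernelEstOne}(ii) provides the smoothness $|\partial_s K_{-M,M}^\alpha(s)|\le C/s^2$ needed for the kernel-difference term; each of these is integrable against $f_2$ via annular decomposition and yields a bound $\le C\M^- f(t)\le C\M_q^- f(t)$.

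The main obstacle I anticipate is the pointwise control of the upper-scale tail $T_{N_2+1,M}^\alpha f(t)$, because its kernel is only of size $C/s$ over the middle range $[a_{N_2+1}^2, a_{M+1}^2]$ and would give a logarithmic loss if bounded naively. The key manoeuvre is Abel summation
\begin{equation*}
T_{N_2+1,M}^\alpha f = v_M\P_{a_{M+1}}^\alpha f - v_{N_2+1}\P_{a_{N_2+1}}^\alpha f + \sum_{j=N_2+1}^{M-1}(v_j-v_{j+1})\P_{a_{j+1}}^\alpha f,
\end{equation*}
whose two boundary Poisson terms are bounded by $C\|v\|_\infty\sup_\tau|\P_\tau^\alpha f(t)|\le C\M^- f(t)$. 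The interior sum applied to $f_1$ enjoys the geometric decay $|\P_{a_j}^\alpha f_1(t)|\le Ce^{-a_j^2/(8L)}\M^- f(t)$, since the Poisson kernel at scale $a_j^2\gg L$ places exponentially small mass on the length-$L/2$ interval supporting $f_1$, so $\sum_{j\ge N_2+1}e^{-a_j^2/(8L)}$ converges geometrically by lacunarity. The residual interior sum applied to $f_2$ is the genuinely delicate piece; my plan is to re-express $\P_{a_{j+1}}^\alpha f_2 = \P_{a_{j+1}}^\alpha f - \P_{a_{j+1}}^\alpha f_1$, absorb the $\P_{a_{j+1}}^\alpha f_1$ contribution via the same exponential sum as above, and then invoke $T_{N_2+1,M}^\alpha f = T_{-M,M}^\alpha f - T_{-M,N_2}^\alpha f$ together with the a.e.\ inequality $|T_{-M,M}^\alpha f(t)|\le \M^-(T_{-M,M}^\alpha f)(t)$, reducing the remaining contribution to a one-sided Cotlar estimate on $T_{-M,N_2}^\alpha$ (analogous to the classical one-sided Cotlar inequality) that can itself be obtained by the same averaging strategy with the roles of the upper and lower scales reversed.
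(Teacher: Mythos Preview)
Your three-term decomposition $T_N^\alpha = T_{-M,M}^\alpha - T_{-M,N_1-1}^\alpha - T_{N_2+1,M}^\alpha$ manufactures an obstruction that the paper avoids. The pieces $T_{-M,N_1-1}^\alpha f(t)$ and $T_{N_2+1,M}^\alpha f(t)$ are evaluated at the fixed point $t$ and do not depend on the averaging variable $z$; consequently the ``H\"older plus uniform $L^q$'' estimate, which controls $\frac{1}{|I|}\int_I |Tf_1(z)|\,dz$, says nothing about the pointwise value $T_{-M,N_1-1}^\alpha f_1(t)$. That value is genuinely not controlled by $\M_q^- f(t)$: the kernel $K_{-M,N_1-1}^\alpha(s)$ is of exact order $1/s$ on each block $s\sim a_k^2$ with $-M\le k\le N_1-1$, so $\int_0^{L/2}|K_{-M,N_1-1}^\alpha(s)|\,ds$ is comparable to $N_1+M$, and choosing $f$ on $(t-L/2,t)$ to match the sign of the kernel makes $|T_{-M,N_1-1}^\alpha f_1(t)|$ grow like $N_1+M$ while $\M_q^- f(t)\le \|f\|_\infty$. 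Your Abel-summation plan for $T_{N_2+1,M}^\alpha f$ is circular for the same reason: the interior sum on $f$ is, modulo two bounded boundary Poisson terms, exactly $T_{N_2+1,M}^\alpha f$ again, and the fallback identity $T_{N_2+1,M}^\alpha f = T_{-M,M}^\alpha f - T_{-M,N_2}^\alpha f$ leaves you with $T_{-M,N_2}^\alpha f(t)$, whose local part has precisely the unboundedness just described.

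The paper's cure is purely algebraic: write instead $T_N^\alpha f = T_{N_1,M}^\alpha f - T_{N_2+1,M}^\alpha f$, so that both pieces are truncations \emph{from below}, and it suffices to bound $|T_{m,M}^\alpha f(0)|$ uniformly in $m$. Splitting $f$ at scale $a_{m+1}^2$, the near part $T_{m,M}^\alpha f_1(0)$ is now harmless pointwise because $|K_{m,M}^\alpha(s)|\le C\sum_{j\ge m}a_j^{-2}\le C a_m^{-2}$ uniformly in $s$ (all scales in the kernel lie \emph{above} the cutoff, so each term contributes at most $C/a_j^2$). For the far part one averages over $(-a_m^2,0)$ and writes, for $u$ in that interval,
\[
T_{m,M}^\alpha f_2(0)=T_{-M,M}^\alpha f(u)-T_{-M,M}^\alpha f_1(u)-T_{-M,m-1}^\alpha f_2(u)+\bigl[T_{m,M}^\alpha f_2(0)-T_{m,M}^\alpha f_2(u)\bigr];
\]
now the H\"older\,$+$\,$L^q$ trick applies to the \emph{averaged} $T_{-M,M}^\alpha f_1$, the bracketed difference is handled by Theorem~\ref{Thm:KernelEstOne}(ii), and the operator $T_{-M,m-1}^\alpha$ appears only against the far piece $f_2$, where your bound $|K_{-M,m-1}^\alpha(s)|\le C a_m^{2\alpha}/s^{1+\alpha}$ is exactly what is needed. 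With this reduction the upper-tail difficulty you anticipated simply never arises.
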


\begin{proof}
Since the operators $T_N^\alpha$ are given by convolutions, they are invariant under translations, and therefore it is enough to prove the theorem for $t=0.$ Observe that, for $N=(N_1, N_2),$
$$T_N^\alpha f(t)=T_{N_1, M}^\alpha f(t)-T_{N_2+1, M}^\alpha f(t),$$
with $-M\le N_1<N_2\le M.$
Then, it suffices to estimate $\abs{T_{m, M}^\alpha f(0)}$ for $\abs{m}\le M$ with constants independent of $m$ and $M.$
Let us split $f$ as
\begin{align*}
f(t)&=f(t)\chi_{(-a^2_{m+1},0]}(t)+f(t)\chi_{(-\infty, -a^2_{m+1}]}(t)+f(t)\chi_{(0,+\infty)}\\&=:f_1(t)+f_2(t)+f_3(t),
\end{align*}
for $-\infty<t<+\infty.$

First, notice that $T^\alpha_{m,M}f_3(0)=0.$ Then,  we have
\begin{align*}
\abs{T_{m,M}^\alpha f(0)}&\le \abs{T_{m,M}^\alpha f_1(0)}+\abs{T_{m,M}^\alpha f_2(0)}\\
&=: I+II.
\end{align*}
For $I$, by the mean value theorem, we have
\begin{align*}
I &=\abs{T_{m,M}^\alpha f_1(0)}\\
  &=C_\alpha \abs{\int_0^{+\infty} \sum_{j=m}^{M}v_j \frac{ a_{j+1}^{2\a}e^{-a_{j+1}^2/(4 s)}-a_j^{2\a} e^{-a_j^2/(4 s)}}{s^{1+\a}}   f_1(-s)ds}\\
  &\le C_\alpha \norm{v}_{l^\infty(\mathbb Z)}\sum_{j=m}^{M} \frac{ a_{j+1}^{2\a}e^{-a_{j+1}^2/(4 s)}+a_j^{2\a} e^{-a_j^2/(4 s)}}{s^{1+\a}}  \abs{f_1(-s)}ds\\
  &\le C_{\alpha, v} \int_0^{+\infty} \sum_{j=m}^{M} \left(\frac{1}{a_{j+1}^{2}} + \frac{1}{a_{j}^{2}}  \right) \abs{f_1(-s)}ds\\
  &\le  C_{\alpha, v} (\rho^4+1)\int_0^{+\infty} \sum_{j=m}^{M}  \frac{1}{a_{j+1}^{2}} \abs{f_1(-s)}ds \quad (\text{since} \quad \rho\le \frac{a_{j+1}}{a_j}\le \rho^2)\\
  &\le C_{\alpha, v, \rho} \frac{1}{a_{m+1}^2} \int_0^{+\infty} \sum_{j=m}^{M}  \frac{a_{m+1}^2}{a_j^2}    \abs{f_1(-s)}ds \\
&\le C_{\alpha, v, \rho} \frac{1}{a_{m+1}^2} \int_0^{+\infty}  \Big( \rho^4+\sum_{j=m}^{M}  \frac{1}{\rho^{2(j-m)}}  \Big)  \abs{f_1(-s)}ds\\
&\le C_{\alpha, v, \rho} \frac{1}{a_{m+1}^2} \int_0^{+\infty} (\rho^2-1)\rho^{4\alpha} \Big( \rho^4+\sum_{j=0}^{+\infty}  \frac{1}{\rho^{2j}}  \Big)  \abs{f_1(-s)}ds\\
&\le C_{\alpha, v, \rho}\frac{1}{a_{m+1}^2}\int^0_{-a_{m+1}^2}   \abs{f(s)}~ds\\
&\le C_{\alpha,\rho, v}\M_q^-f(0).
\end{align*}
For part $II$,
\begin{align*}\label{equ:II}
II&=\abs{T_{m,M}^\alpha f_2(0)}=\frac{1}{a_m^2}\int_{-a_m^2}^0 \abs{T_{m,M}^\alpha f_2(0)}du \\
&\le \frac{1}{a_m^2} \int_{-a_m^2}^0\abs{T_{-M,M}^\alpha f(u)}du +\frac{1}{a_m^2} \int_{-a_m^2}^0\abs{T_{-M,M}^\alpha f_1(u)}du\\
&\quad +\frac{1}{a_m^2} \int_{-a_m^2}^0\abs{T_{m,M}^\alpha f_2(u)-T_{m,M}^\alpha f_2(0)}du\\
&\quad +\frac{1}{a_m^2} \int_{-a_m^2}^0\abs{T_{-M,m-1}^\alpha f_2(u)}du\\
&=:A_1+A_2+A_3+A_4.
\end{align*}
(If $m=-M$, we understand that $A_4=0$.) It is clear that
\begin{equation*}
A_1\le  \M^- (T_{-M,M}^\alpha f)(0).
\end{equation*}
For $A_2,$ by the uniform boundedness of $T^\alpha_{N}$ in Theorem \ref{Thm:PoissonLpOne}, we get
\begin{multline*}
A_2\le \left(\frac{1}{a_m^2}\int_{-a_m^2}^0\abs{T_{-M,M}^\alpha f_1(u)}^qdu\right)^{1/q}\le C\left(\frac{1}{a_m^2}\int_{\mathbb{R}}\abs{f_1(u)}^qdu\right)^{1/q}\\
=C\left(\frac{1}{a_m^2}\int_{-a_m^2}^0\abs{f(u)}^qdu\right)^{1/q}\le C\M^-_qf(0).
\end{multline*}
For the third term $A_3$, with $-a_m^2\le u\le 0$, by the mean value theorem and  Theorem \ref{Thm:KernelEstOne}, we have
\begin{align*}
&\abs{T_{m,M}^\alpha f_2(u)-T_{m,M}^\alpha f_2(0)}=\abs{\int_{-\infty}^u K_{m,M}^\alpha(u-s)f_2(s)ds-\int_{-\infty}^0 K_{m,M}^\alpha(-s)f_2(s)ds }\\
&\le \int_{-\infty}^u \abs{K_{m,M}^\alpha(u-s)-K_{m,M}^\alpha(-s)}\abs{f_2(s)}ds+\abs{\int_{u}^0K_{m,M}^\alpha(-s)f_2(s)ds}\\
&=\int_{-\infty}^{-a_{m+1}^2} \abs{K_{m,M}^\alpha(u-s)-K_{m,M}^\alpha(-s)}\abs{f(s)}ds\\
&=\sum_{j=m+1}^{+\infty}\int_{-a_{j+1}^2}^{-a_{j}^2} \abs{K_{m,M}^\alpha(u-s)-K_{m,M}^\alpha(-s)}\abs{f(s)}ds\\
&=\sum_{j=m+1}^{+\infty}\int_{-a_{j+1}^2}^{-a_{j}^2} \abs{\partial_t K_{m,M}^\alpha(t)\big|_{t=\xi_j}}\abs{u}\abs{f(s)}ds\quad ( a_j^2-a_m^2\le \xi_j\le a_{j+1}^2)\\
&\le C\sum_{j=m+1}^{+\infty}\int_{-a_{j+1}^2}^{-a_{j}^2} \frac{\abs{u}}{\abs{\xi_j}^2}\abs{f(s)}ds\le C\sum_{j=m+1}^{+\infty}\frac{a_m^2}{\left(a_j^2-a_m^2\right)^2}\int_{-a_{j+1}^2}^{0} \abs{f(s)}ds\\
&\le C\sum_{j=m+1}^{+\infty}\frac{a_m^2}{a_j^2}\cdot\frac{\rho^4}{(\rho^4-1)a_{j+1}^2}\int_{-a_{j+1}^2}^{0} \abs{f(s)}ds\\
&\le C\sum_{j=m+1}^{+\infty}\frac{1}{\rho^{2(j-m)}}\ \M^-f(0)\\
&\le C\M_q^-f(0).
\end{align*}
Then,
\begin{align*}
A_3=\frac{1}{a_m^2} \int_{-a_m^2}^0\abs{T_{m,M}^\alpha f_2(u)-T_{m,M}^\alpha f_2(0)}du \le C\M_q^-f(0).
\end{align*}
For the latest one,  $A_4,$  we have
\begin{align*}
A_4=\frac{1}{a_m^2} \int_{-a_m^2}^0\abs{T_{-M,m-1}^\alpha f_2(u)}du\le \frac{1}{a_m^2} \int_{-a_m^2}^0\int_{-\infty}^{-a_{m+1}^2}
\abs{K_{-M,m-1}^\alpha (u-s)f_2(s)}dsdu.
\end{align*}
Then, we consider the inner integral appeared in the above inequalities first. Since $-a_m^2\le u\le 0$,  $-\infty<s\le -a_{m+1}^2$ and the sequence $\{a_j\}_{j\in \mathbb Z}$ is $\rho$-lacunary sequence, we have  $\abs{u-s}\sim \abs{s}.$
From this and by the mean value theorem, we get
\begin{align*}
&\quad \int_{-\infty}^{-a_{m+1}^2} \abs{K_{-M,m-1}^\alpha (u-s)f_2(s)}ds\\
&=\sum_{k=m+1}^{+\infty}\int_{-a_{k+1}^2}^{-a_{k}^2} \abs{\sum_{j=-M}^{m-1}v_j \frac{ a_{j+1}^{2\a}e^{-a_{j+1}^2/(4 (u-s))}-a_j^{2\a} e^{-a_j^2/(4 (u-s))}}{(u-s)^{1+\a}} f(s)}ds\\
&\le \sum_{k=m+1}^{+\infty}\int_{-a_{k+1}^2}^{-a_{k}^2} \abs{\sum_{j=-M}^{m-1}v_j \frac{ (a_{j+1}-a_j)\xi_{j}^{2\alpha-1}e^{-\xi_{j}^2/(4 (u-s))}}{(u-s)^{1+\a}} f(s)}ds \quad (a_j\le \xi_j\le a_{j+1})\\
&\le C\norm{v}_{l^\infty(\mathbb Z)} \sum_{k=m+1}^{+\infty}\int_{-a_{k+1}^2}^{-a_{k}^2}\sum_{j=-M}^{m-1} \abs{\frac{\rho^{4\alpha} (\rho^2-1)a_{j}^{2\alpha}e^{-a_{j}^2/(4 s)}}{s^{1+\a}}}\abs{ f(s)}ds\\
&\le C_{\rho, v, \alpha} \sum_{k=m+1}^{+\infty}{1\over {a_{k}^2}}\int_{-a_{k+1}^2}^{-a_{k}^2}\sum_{j=-M}^{m-1} \frac{a_{j}^{2\alpha}}{a_k^{2\a}}\abs{ f(s)}ds\\
&\le C_{\rho, v, \alpha} \sum_{k=m+1}^{+\infty}{1\over {a_{k+1}^2}}\int_{-a_{k+1}^2}^{-a_{k}^2}\sum_{j=-M}^{m-1} {\rho^{-2\alpha(k-j)}}\abs{ f(s)}ds\\
&\le C_{\rho, v, \alpha} \sum_{k=m+1}^{+\infty}\frac{\rho^{-2\alpha(k-m+1)}}{a_{k+1}^2}\int_{-a_{k+1}^2}^{-a_{k}^2}\abs{ f(s)}ds\\
&\le C_{\rho, v, \alpha} \sum_{k=m+1}^{+\infty}\frac{1}{\rho^{2\alpha(k-m+1)}}\frac{1}{a_{k+1}^2} \int_{-a_{k+1}^2}^{0}\abs{ f(s)}ds\\
&\le C_{\rho, v, \alpha} \sum_{k=m+1}^{+\infty}\frac{1}{\rho^{2\alpha(k-m+1)}}\M^-f(0)\\
&\le C_{\rho, v, \alpha}\M_q^-f(0).\\
\end{align*}
Hence,
$$A_4\le C\M_q^-f(0).$$
Combining the estimates above for $A_1, A_2, A_3$ and $A_4$, we get
$$II\le \M^- (T_{-M,M}^\alpha f)(0)+C \M_q^- f(0).$$
And, then we have
$$ \abs{T_{m,M}^\alpha f(0)}\le C\left( \M^- (T_{-M,M}^\alpha f)(0)+\M_q^- f(0)\right). $$
As the constants $C$ appeared above all  depend on $\norm{v}_{l^\infty(\mathbb Z)}$, $\rho$ and $\alpha$, not on $m, M$, we complete the proof.
\end{proof}

Now we can start the proof of Theorem \ref{thm:Maxi}.

\begin{proof}[Proof of Theorem \ref{thm:Maxi}]
For each $\omega\in A_p^-,$ choose $1<q<p<\infty$ such that $\omega\in A_{p/q}^-.$  Then, it is well known that the   maximal operators $\M^-$ and $ \M_q^-$ are bounded in $L^p(\real, \omega)$.  On the other hand, by Theorem \ref{Thm:PoissonLpOne}, the operators $T_N^\alpha$ are uniformly bounded in $L^p(\real, \omega)$ with $\omega \in A_p^-$. Hence
\begin{align*}
\norm{T_M^*f}_{L^p(\omega)}&\le C\left(\norm{\M^- (T_{-M, M}^\alpha f)}_{L^p(\omega)}+\norm{\M_q^- f}_{L^p(\omega)}\right)\\&\le C\left(\norm{T_{-M, M}^\alpha f}_{L^p(\omega)}+\norm{f}_{L^p(\omega)}\right)\le C\norm{f}_{L^p(\omega)}.
\end{align*}
Note that the constants $C$ appeared above do not depend on $M$. Consequently, letting $M$ increase to infinity, we get the proof of the $L^p$ boundedness of $T^*.$  This completes the proof of part $(a)$ of the theorem.

In order to prove $(b)$, we consider the $\ell^\infty(\mathbb Z^2)$-valued operator
$\mathcal{T}f(t) = \{  T_N^\alpha f(t) \}_{N\in \mathbb Z^2}$. Since $\|\mathcal{T}f(t) \|_{\ell^\infty(\mathbb Z^2))}= T^*f(t)$,   by using $(a)$ we know that the operator $\mathcal{T}$ is bounded from $L^p(\mathbb{R}, \omega) $ into $L^p_{\ell^\infty(\mathbb Z^2)}(\mathbb{R}, \omega) $, for every $1<p<\infty$ and $\omega \in A_p^-$. The kernel of the operator $\mathcal{T}$ is given by $\mathcal{K}^\alpha(t) = \{ K^\alpha_N(t)\} _{N\in \mathbb Z^2}$. By Theorem \ref{Thm:KernelEstOne} and the vector valued version of Theorem 2.1 in \cite{AFM}, we get that the operator $\mathcal{T}$ is bounded from $L^1(\mathbb{R}, \omega)$ into weak- $L^1_{\ell^\infty(\mathbb Z^2)}(\mathbb{R}, \omega)$ for $\omega \in A_1^-$. Hence, as $\|\mathcal{T}f(t) \|_{\ell^\infty(\mathbb Z^2)}= T^*f(t)$, we get the proof of  $(b)$.

For $(c),$ we shall prove that if $f\in L^\infty(\mathbb R)$ and there exists $t_0\in \mathbb R$ such that $T^*f(t_0)<\infty,$ then $T^*f(t)<\infty$ for $a.e.$ $t\in \real.$ Given $t\neq t_0.$ Set
$f_1=f\chi_{( t_0-4|t_0-t|,\  t_0+4|t_0-t|)}$ and $f_2 = f-f_1$.   Note that $T^*$ is $L^p$-bounded for any $1<p<\infty.$ Then $T^*f_1(t)<\infty$, because $f_1\in L^p(\mathbb R)$, for any $1<p<\infty.$ On the other hand, as the kernel $K_N$ is supported in $\mathbb{R}^+,$ we have
\begin{align*}
&\Big|T_N^\alpha f_2(t)-T_N^\alpha f_2(t_0)\Big|\\&=\Big|\int_{-\infty}^{t} K_N^\alpha (t-s)f_2(s)ds-\int_{-\infty}^{t_0} K_N^\alpha (t_0-s)f_2(s)ds \Big|\\
&=\Big| \int_{-\infty}^{t_0-4|t_0-t|}\left(K_N^\alpha(t-s) - K_N^\alpha (t_0-s)\right)f_2(s)ds\Big|\\
&\le \int_{-\infty}^{t_0-4|t_0-t|} \abs{\partial_s K_N^\alpha(\xi(s))}\abs{t-t_0}\abs{f_2(s)}ds \quad  (t-s\le \xi(s)\le t_0-s) \\
&\le C \int_{-\infty}^{{t_0-4|t_0-t|}  }  \frac{\abs{t-t_0}}{(t-s)^2}\abs{f_2(s)}ds  \\
&\le C\norm{f}_{L^\infty(\mathbb R)}< \infty.
 \end{align*}
 Hence
  \begin{align*}
\norm{T_N^\alpha f_2(t)-T_N^\alpha f_2(t_0)}_{l^\infty(\mathbb Z^2)} \le C\norm{f}_{L^\infty(\mathbb R)}
\end{align*}
and therefore $ T^*f(t) = \norm{T_N^\alpha f(t)}_{l^\infty(\mathbb Z^2)}  \le C < \infty.$ For the $L^\infty-BMO$ boundedness, we will prove it later.

$(d)$ Let $t_0$ be one point in $\real$ such that  $T^*f(t_0)<\infty.$
Set $I=[t_0-4|t_0-t|,\  t_0+4|t_0-t|]$ with $t\neq t_0$. And we decompose $f$ to be
\begin{equation*}
f=(f-f_I)\chi_I+(f-f_I)\chi_{I^c}+f_I=:f_1+f_2+f_3.
\end{equation*}
Note that $T^*$ is $L^p$-bounded for any $1<p<\infty.$ Then $T^*f_1(t)<\infty$, because $f_1\in L^p(\mathbb R)$, for any $1<p<\infty.$ And  $T_N^\alpha f_3=0$, since $\P_{a_j}^\alpha f_3=f_3$ for any $j\in \mathbb Z.$
On the other hand, as the kernel $K_N$ is supported in $\mathbb{R}^+,$ we have
\begin{align*}
&\Big|T_N^\alpha f_2(t)-T_N^\alpha f_2(t_0)\Big|\\&=\Big|\int_{-\infty}^{t} K_N^\alpha (t-s)f_2(s)ds-\int_{-\infty}^{t_0} K_N^\alpha (t_0-s)f_2(s)ds \Big|\\
&=\Big| \int_{-\infty}^{t_0-4|t_0-t|}\left(K_N^\alpha(t-s) - K_N^\alpha (t_0-s)\right)f_2(s)ds\Big|\\
&\le \int_{-\infty}^{t_0-4|t_0-t|} \abs{\partial_s K_N^\alpha(\xi(s))}\abs{t-t_0}\abs{f_2(s)}ds \quad  (t-s\le \xi(s)\le t_0-s) \\
&\le C \int_{-\infty}^{{t_0-4|t_0-t|}  }  \frac{\abs{t-t_0}}{(t-s)^2}\abs{f_2(s)}ds  \\
& \le C \sum_{k=2}^{+\infty}{ |t-t_0|} \int_{t_0-2^{k+1}|t_0-t|}^{{t_0-2^k|t_0-t|}}  {\abs{f(s)-f_I}\over |t-s|^2}ds\\
&\le C \sum_{k=2}^{+\infty}{ |t-t_0|\over (2^{k+1}|t-t_0|)^2} \int_{t_0-2^{k+1}|t_0-t|}^{{t_0-2^k|t_0-t|}}  {\abs{f(s)-f_I}}ds\\
&\le  C \sum_{k=2}^{+\infty}{ |t-t_0|\over (2^{k+1}|t-t_0|)^2} \int_{t_0-2^{k+1}|t_0-t|}^{{t_0+2^{k+1}|t_0-t|}}  {\abs{f(s)-f_I}}ds\\
&=C \sum_{k=2}^{+\infty}2^{-(k+1)}{ 1\over 2^{k+1}|t-t_0|} \int_{I_{k+1}}  {\abs{f(s)-f_I}}ds\\
&\le C \sum_{k=2}^{+\infty}2^{-(k+1)}{ 1\over 2^{k+1}|t-t_0|} \int_{I_{k+1}} \left(  {\abs{f(s)-f_{I_{k+1}}}}+\sum_{l=2}^{k}\abs{f_{I_{l+1}}-f_{I_{l}}}\right)ds\\
&\le C\sum_{k=2}^{+\infty}2^{-(k+1)}{ 1\over 2^{k+1}|t-t_0|} \int_{I_{k+1}}\left(  {\abs{f(s)-f_{I_{k+1}}}}+2k\norm{f}_{BMO(\real)}\right)ds\\
&\le C\sum_{k=2}^{+\infty}2^{-(k+1)}{(1+2k)\norm{f}_{BMO(\real)}}\\
&\le  C\norm{f}_{BMO(\real)},
\end{align*}
where $I_{k+1}=[t_0-2^{k+1}|t_0-t|, {t_0+2^{k+1}|t_0-t|}]$ for any $k\in \mathbb N.$
 Hence
  \begin{align*}
\norm{T_N^\alpha f_2(t)-T_N^\alpha f_2(t_0)}_{l^\infty(\mathbb Z^2)} \le C\norm{f}_{BMO(\mathbb R)}
\end{align*}
and therefore $ T^*f(t) = \norm{T_N^\alpha f(t)}_{l^\infty(\mathbb Z^2)}  \le C < \infty.$

Now, we shall prove  the  estimate (\ref{sharp}) for functions such that $T^*f(t) < \infty \, \, a.e.$
For any $h>0$ and $t_0$ such that $T^*f(t_0) < \infty$, consider the integral $I=(t_0, t_0+h)$ and $\displaystyle f_I={1\over h}\int_I f(t)dt.$  We have $T^*f_I(t)=0.$ Let $f(t)=f_1(t)+f_2(t)+f_I$, where $f_1(t)=(f(t)-f_I)\chi_{(t_0-4h, t_0+4h)}(t)$ and $f_2(t)=(f(t)-f_I)\chi_{(-\infty, t_0-4h)}(t)+(f(t)-f_I)\chi_{(t_0+4h, +\infty)}(t)$.
Then,
\begin{align*}
&{1\over h}\int_{t_0}^{t_0+h}\abs{T^*f(t)-(T^*f)_I}dt={1\over h}\int_{t_0}^{t_0+h}\abs{{1\over h}\int_{t_0}^{t_0+h}\left(T^*f(t)-T^*f(s)\right)ds}dt\\
&\le {1\over h^2}\int_{t_0}^{t_0+h}\int_{t_0}^{t_0+h}\abs{T^*f(t)-T^*f(s)}dsdt\\
&={1\over h^2}\int_{t_0}^{t_0+h}\int_{t_0}^{t_0+h}\abs{\norm{T^\alpha_Nf(t)}_{l^\infty(\mathbb Z^2)}-\norm{T^\alpha_Nf(s)}_{l^\infty(\mathbb Z^2)}}dsdt\\
&\le {1\over h^2}\int_{t_0}^{t_0+h}\int_{t_0}^{t_0+h}{\norm{T^\alpha_Nf(t)-T^\alpha_Nf(s)}_{l^\infty(\mathbb Z^2)}}dsdt\\
&\le {1\over h^2}\int_{t_0}^{t_0+h}\int_{t_0}^{t_0+h}{\norm{T^\alpha_Nf_1(t)-T^\alpha_N f_1(s)}_{l^\infty(\mathbb Z^2)}}dsdt\\
&\quad + {1\over h^2}\int_{t_0}^{t_0+h}\int_{t_0}^{t_0+h}{\norm{T^\alpha_Nf_2(t)-T^\alpha_Nf_2(s)}_{l^\infty(\mathbb Z^2)}}dsdt\\
&=:A+B.
\end{align*}
The H\"older inequality and  $L^2$-boundedness of $T^*$ imply that
\begin{align*}
A&\le {1\over h}\int_{t_0}^{t_0+h}{\norm{T^\alpha_Nf_1(t)}_{l^\infty(\mathbb Z^2)}}dt
      +{1\over h}\int_{t_0}^{t_0+h}{\norm{T^\alpha_Nf_1(s)}_{l^\infty(\mathbb Z^2)}}ds\\
  &\le \left({1\over h}\int_{t_0}^{t_0+h}{\norm{T^\alpha_Nf_1(t)}^2_{l^\infty(\mathbb Z^2)}}dt\right)^{1/2}
      +\left({1\over h}\int_{t_0}^{t_0+h}{\norm{T^\alpha_Nf_1(s)}^2_{l^\infty(\mathbb Z^2)}}ds\right)^{1/2}\\
  &\le C{1\over h^{1/2}}\norm{f_1}_{L^2(\mathbb R)}\le  C\norm{f}_{BMO(\mathbb R)}.
\end{align*}
For $B$, since $t_0\le t,s\le t_0+h$ and the support of $f_2$ is $(-\infty, t_0-4h)\bigcup(t_0+4h, +\infty)$,
we have
\begin{align*}
&\Big|T_N^\alpha f_2(t)-T_N^\alpha f_2(s)\Big|\\
&=\Big|\int_{-\infty}^{t} K_N^\alpha (t-u)f_2(u)du-\int_{-\infty}^{t_0} K_N^\alpha (s-u)f_2(u)du \Big|\\
&=\Big| \int_{-\infty}^{t_0-4h}\left(K_N^\alpha(t-u) - K_N^\alpha (s-u)\right)f_2(u)du\Big|\\
&\le \int_{-\infty}^{t_0-4h} \abs{\partial_u K_N^\alpha(\xi(u))}\abs{t-s}\abs{f_2(u)}du \quad  (t-u\le \xi(u)\le t_0-u) \\
&\le C \int_{-\infty}^{{t_0-4h}  }  \frac{\abs{t-s}}{(t-u)^2}\abs{f_2(u)}du  \\
& \le C \sum_{k=2}^{+\infty
} \int_{t_0-2^{k+1}h}^{{t_0-2^kh}}  {{ h}\abs{f(u)-f_I}\over |t-u|^2}du\\
&\le C \sum_{k=2}^{+\infty}{ h\over (2^{k+1}h)^2} \int_{t_0-2^{k+1}h}^{{t_0-2^kh}}  {\abs{f(u)-f_I}}du\\
&\le  C \sum_{k=2}^{+\infty}{ h\over (2^{k+1}h)^2} \int_{t_0-2^{k+1}h}^{{t_0+2^{k+1}h}}  {\abs{f(u)-f_I}}du\\
&=C \sum_{k=2}^{+\infty}2^{-(k+1)}{ 1\over 2^{k+1}h} \int_{I_{k+1}}  {\abs{f(u)-f_I}}du\\
&\le C \sum_{k=2}^{+\infty}2^{-(k+1)}{ 1\over 2^{k+1}h} \int_{I_{k+1}} \left(  {\abs{f(u)-f_{I_{k+1}}}}+\sum_{l=2}^{k}\abs{f_{I_{l+1}}-f_{I_{l}}}\right)du\\
&\le C\sum_{k=2}^{+\infty}2^{-(k+1)}{ 1\over 2^{k+1}h} \int_{I_{k+1}}\left(  {\abs{f(u)-f_{I_{k+1}}}}+2k\norm{f}_{BMO(\real)}\right)du\\
&\le C\sum_{k=2}^{+\infty}2^{-(k+1)}{(1+2k)\norm{f}_{BMO(\real)}}\\
&\le  C\norm{f}_{BMO(\real)},
\end{align*}
where $I_{k+1}$ denotes the interval $[t_0-2^{k+1}h, t_0+2^{k+1}h]$.  Hence, we have $B\le C \norm{f}_{BMO(\real)}.$
Then by the arbitrary of $t_0$ and $h>0$, we proved
$$\norm{T^*f}_{BMO(\mathbb R)}\le C\norm{f}_{BMO(\mathbb R)}.$$
For the second part of $(c)$, we can deduce it from the $BMO$-boundedness of $T^*$ and the inclusion of $L^\infty(\real)\subset BMO(\real).$
This completes the proof of Theorem \ref{thm:Maxi}.
\end{proof}

Now we shall prove Theorem \ref{Thm:ae}.
\begin{proof}[Proof of Theorem \ref{Thm:ae}]
First, we shall see that if $\varphi$ is a test function, then $T_N^\alpha \varphi(t)$ converges for all $t\in \real$.  In order to prove this, it is enough to see that for any  $(L,M)$ with $0<L<M$,  the  series \begin{equation*} A= \sum_{j=L}^M v_j ( \P^\alpha_{a_{j+1}} \varphi(t) - \P_{a_j}^\alpha \varphi(t))  \hbox{  and  } B= \sum_{j=-M}^{-L} v_j ( \P^\alpha_{a_{j+1}} \varphi(t) - \P_{a_j}^\alpha \varphi(t))
 \end{equation*}
converge to zero, when $L, M\rightarrow +\infty$.
By the mean value theorem, following the arguments in the proof of Theorem \ref{Thm:Maximalcontrol}, we have
\begin{align*}
|A|   & \le  C_\alpha \norm{v}_{l^\infty(\mathbb Z)} \int_0^\infty\sum_{j=L}^{M} \abs{\frac{\xi_j^{2\alpha-1} e^{-\xi_{j}^2/(4 s)}(a_{j+1}-a_j)}{s^{1+\a}}} |\varphi(t-s)|ds, \quad (\exists \ a_j\le \xi_j\le a_{j+1}) \\
&\le C_{\alpha, v}  \int_0^{+\infty} \rho^{4\alpha}(\rho^2-1)\sum_{j=L}^{M}  \frac{a_{j}^{2\alpha} e^{-a_{j}^2/(4 s)}}{s^{1+\a}}  |\varphi(t-s)|ds, \quad (\text{since} \quad \rho\le \frac{a_{j+1}}{a_j}\le \rho^2)\\
&\le C_{\alpha, v, \rho}   \int_0^{+\infty} \sum_{j=L}^{M}  \frac{C}{a^2_j} |\varphi(t-s)|ds \\
&\le  C_{\alpha, v, \rho}\left({1\over a_L^2}\sum_{j=L}^{M} \frac{a_L^2}{a_j^2}\right) \int_0^{+\infty} |\varphi(t-s)|ds\\
&\le C_{\alpha,v, \rho} {\rho^2\over {\rho^2-1}}\|\varphi\|_{L^1(\real)}{1\over a_L^2} \longrightarrow 0, \quad \hbox{as}\ { L,M \to +\infty}.
\end{align*}
On the other hand,  as the integral of the kernels are zero, we can write
\begin{align*}
B&=C_\alpha \int_0^{+\infty} \sum_{j=-M}^{-L}v_j \frac{ a_{j+1}^{2\a}e^{-a_{j+1}^2/(4 s)}-a_j^{2\a} e^{-a_j^2/(4 s)}}{s^{1+\a}} ( \varphi(t-s)- \varphi(t))ds\\
&= C_\alpha \left\{\int_0^1 + \int_1^\infty\right\} \sum_{j=-M}^{-L}v_j \frac{ a_{j+1}^{2\a}e^{-a_{j+1}^2/(4 s)}-a_j^{2\a} e^{-a_j^2/(4 s)}}{s^{1+\a}} ( \varphi(t-s)- \varphi(t))ds\\
&=: B_1+B_2.  \end{align*}
Proceeding as in the case $A$, and by using the fact that $\varphi$ is a test function,  we have
\begin{align*}
|B_1|&= C_\alpha \Big| \int_0^{1} \sum_{j=-M}^{-L}v_j \frac{ a_{j+1}^{2\a}e^{-a_{j+1}^2/(4 s)}-a_j^{2\a} e^{-a_j^2/(4 s)}}{s^{1+\a}} ( \varphi(t-s)- \varphi(t)) ds\Big| \\
&\le C_\alpha \norm{\varphi'}_{L^\infty(\real)} \int_0^{1} \sum_{j=-M}^{-L}v_j \frac{ a_{j+1}^{2\a}e^{-a_{j+1}^2/(4 s)}-a_j^{2\a} e^{-a_j^2/(4 s)}}{s^{\a}} ds \\
&\le C_{\alpha, \varphi} \norm{v}_{l^\infty (\mathbb Z)}  \int_0^1 \rho^{4\alpha}(\rho^2-1)\sum_{j=-M}^{-L}  \frac{a_{j}^{2\alpha} e^{-a_{j}^2/(4 s)}}{s^{\a}}  ds\\
&\le C_{\alpha, \varphi, v, \rho}  a_{-L}^{2\alpha}\sum_{j=-M}^{-L}  {a_j^{2\alpha}\over  a_{-L}^{2\alpha}} \int_0^1\frac1{s^{\a}}  ds \\
 &\le C_{\alpha, \varphi, v, \rho}  {\rho^{2\alpha}\over \rho^{2\alpha}-1}a_{-L}^{2\alpha}   \longrightarrow 0,\quad  \hbox{as}\quad { L,M \to +\infty}. \end{align*}
On the other hand,
\begin{align*}
|B_2|&\le C_{\alpha,\rho} \norm{v}_{l^\infty (\mathbb Z)} \|\varphi\|_{L^\infty(\real) }\int_1^\infty \sum_{j=-M}^{-L}\frac{ a_{j}^{2\a}}{s^{1+\a}}  ds \le  C_{\alpha, v, \varphi,\rho} \sum_{j=-M}^{-L}{ a_j^{2\alpha}}\int_1^\infty {1\over s^{1+\a}}  ds \\
& \le C_{\alpha, v, \varphi,\rho}    a_{-L}^{2\alpha}\sum_{j=-M}^{-L}  {a_j^{2\alpha}\over  a_{-L}^{2\alpha}} \le C_{\alpha, \varphi, v,\rho}  {\rho^{2\alpha}\over \rho^{2\alpha}-1}a_{-L}^{2\alpha}   \longrightarrow 0,\quad \hbox{as}\quad { L,M \to +\infty}.
\end{align*}

As the set of test functions is dense in $L^p(\mathbb{R})$, by Theorem \ref{thm:Maxi} we get the $a.e.$ convergence for any function in $L^p(\mathbb{R})$. Analogously, since $L^p(\mathbb{R}) \cap L^p(\mathbb{R}, \omega) $ is dense in $L^p(\mathbb{R}, \omega)$, we get the $a.e.$ convergence for functions in $L^p(\mathbb{R}, \omega) $ with $1\le p<\infty$. By using the dominated convergence theorem,  we can prove the convergence in $L^p(\mathbb{R}, \omega)$-norm for $1<p<\infty$, and also in measure.
\end{proof}

\medskip

\section{Proofs of Theorems \ref{Thm:infny} and  \ref{Thm:GrothLinfinity}}\label{sec:equinfty}
In this section, we will give the proof of  Theorems \ref{Thm:infny} and  \ref{Thm:GrothLinfinity}.

\begin{proof}[Proof of Theorem \ref{Thm:infny}]
 Let  $f$ be the function defined by
\begin{equation*}
f(s) = \sum_{k\in \mathbb{Z}} (-1)^{k} \chi_{(-a^{2k+1}, -a^{2k}]}(s),
\end{equation*}
where $a>1$ is a real number that we shall fix it later.
It is easy to see that
\begin{equation}\label{equ:dilation}
f(a^{2j}s) = (-1)^j f(s).
\end{equation}
Let $a_j= a^{j}.$ Then
\begin{align*}
\mathcal{P}_{a_j}^\alpha f(t) &= \frac{1}{4^\alpha \Gamma(\alpha) }
\int_0^{+\infty} \frac{a^{2\alpha j} e^{-a^{2j}/(4s)}}{s^{1+\alpha}} f(t-s) ds =
\frac{1}{4^\alpha \Gamma(\alpha) }
\int_0^{+\infty} \frac{ e^{-1/(4u)}}{u^{\alpha}} f(t-a^{2j}u) \frac{du}{u}.
\end{align*}
So
\begin{equation*}
\mathcal{P}_{a_j}^\alpha f(0) =
\frac{1}{4^\alpha \Gamma(\alpha) }
\int_0^{+\infty} \frac{ e^{-1/(4u)}}{u^{\alpha}} f(-a^{2j}u) \frac{du}{u} =
(-1)^j \frac{1}{4^\alpha \Gamma(\alpha) }
\int_0^{+\infty} \frac{ e^{-1/(4u)}}{u^{\alpha}} f(-u) \frac{du}{u}.
\end{equation*}
We observe that
\begin{eqnarray*}
\int_0^{+\infty} \frac{ e^{-1/(4u)}}{u^{\alpha}} \big| f(-u)\big| \frac{du}{u} \le
\int_0^{+\infty} \frac{ e^{-1/(4u)}}{u^{\alpha}} \frac{du}{u}=4^\alpha\Gamma(\alpha)< \infty.
\end{eqnarray*}
Hence $$\lim_{R\to {+\infty}} \int_R^{+\infty} \frac{ e^{-1/(4u)}}{u^{\alpha}}  f(-u)\frac{du}{u} =0\quad  \hbox{  and  } \quad \lim_{\varepsilon \to 0^+} \int_0^\varepsilon \frac{ e^{-1/(4u)}}{u^{\alpha}}  f(-u)\frac{du}{u} =0. $$
On the other hand, $\displaystyle \lim_{a\to {+\infty}} \int_1^a \frac{ e^{-1/(4u)}}{u^{\alpha}}  f(-u)\frac{du}{u}  = \displaystyle \lim_{a\to {+\infty}} \int_1^a \frac{ e^{-1/(4u)}}{u^{\alpha}} \frac{du}{u} =C>0.$  Hence we can choose $a>1$ big enough such that
 \begin{multline*}
 \int_1^a \frac{ e^{-1/(4u)}}{u^{\alpha}}  f(-u)\frac{du}{u} = \int_1^a \frac{ e^{-1/(4u)}}{u^{\alpha}}  \frac{du}{u}  >  \abs{\int_0^{1/a} \frac{ e^{-1/(4u)}}{u^{\alpha}} \frac{du}{u}} + \abs{\int_{a^2}^{+\infty} \frac{ e^{-1/(4u)}}{u^{\alpha}} \frac{du}{u}}\\
 >  \abs{\int_0^{1/a} \frac{ e^{-1/(4u)}}{u^{\alpha}}f(-u) \frac{du}{u}} + \abs{\int_{a^2}^{+\infty} \frac{ e^{-1/(4u)}}{u^{\alpha}}f(-u) \frac{du}{u}}.
\end{multline*}
In other words, with the $a>1$ fixed above,  there exists constant $C_1>0$ such that
\begin{equation}\label{equ:cons1}
\int_0^{+\infty} \frac{ e^{-1/(4u)}}{u^{\alpha}}  f(-u)\frac{du}{u}=C_1.
\end{equation}
Hence
\begin{equation*}
\Big|\mathcal{P}_{a_j}^\alpha f(0)- \mathcal{P}_{a_{j+1}}^\alpha f(0)\Big| = \frac{ 2 C_1}{4^\alpha \Gamma(\alpha)}>0.
\end{equation*}
Therefore we have   $$\sum_{j\in \mathbb Z} \Big|\mathcal{P}_{a_{j+1}}^\alpha f(0)- \mathcal{P}_{a_{j}}^\alpha f(0)\Big|  = \infty.$$
By using \eqref{equ:dilation} and changing variable  we get
\begin{equation*}
\mathcal{P}_{a_j}^\alpha f(t) =
\frac{1}{4^\alpha \Gamma(\alpha) }
\int_0^{+\infty} \frac{ e^{-1/(4u)}}{u^{\alpha}} f(t-a^{2j}u) \frac{du}{u} =
(-1)^j \frac{1}{4^\alpha \Gamma(\alpha) }
\int_0^{+\infty} \frac{ e^{-1/(4u)}}{u^{\alpha}} f\left({t\over a^{2j}}-u\right) \frac{du}{u}.
\end{equation*}
Then
\begin{align}\label{equ:diffP}
&{\mathcal{P}_{a_{j+1}}^\alpha f(t)-\mathcal{P}_{a_j}^\alpha f(t)}\nonumber\\
& =
\frac{ (-1)^{j+1}}{4^\alpha \Gamma(\alpha) }
\Big\{\int_0^{+\infty}\frac{ e^{-1/(4u)}}{u^{\alpha}} f\left({t\over {a^{2(j+1)}}}-u\right) \frac{du}{u}+\int_0^{+\infty}\frac{ e^{-1/(4u)}}{u^{\alpha}} f\left({t\over {a^{2j}}}-u\right) \frac{du}{u}\Big\}.
\end{align}
By the dominated convergence theorem, we know that
\begin{equation*}
\lim_{h\to 0}\int_0^{+\infty}\frac{ e^{-1/(4u)}}{u^{\alpha}} f(h-u) \frac{du}{u}=\int_0^{+\infty}\frac{ e^{-1/(4u)}}{u^{\alpha}} f(-u) \frac{du}{u}  = C_1>0,
\end{equation*}
where $C_1$ is the constant appeared in \eqref{equ:cons1}.
So, there exists $0<\eta_0<1,$ such that, for $|h|<\eta_0,$
\begin{equation*}
\int_0^{+\infty}\frac{ e^{-1/(4u)}}{u^{\alpha}} f(h-u) \frac{du}{u}\ge {1\over 2}\int_0^{+\infty}\frac{ e^{-1/(4u)}}{u^{\alpha}} f(-u) \frac{du}{u} = \frac{C_1}{2}.
\end{equation*}
Then, for each $t\in \real$, we can choose $j\in \mathbb Z$  such that $\displaystyle {|t|\over {a^j}}<\eta_0$  (there are infinite $j$ satisfying this condition), and we have
\begin{equation*}
\int_0^{+\infty}\frac{ e^{-1/(4u)}}{u^{\alpha}} f\left({t\over {a^{2(j+1)}}}-u\right) \frac{du}{u}+\int_0^{+\infty}\frac{ e^{-1/(4u)}}{u^{\alpha}} f\left({t\over {a^{2j}}}-u\right) \frac{du}{u} \ge C_1>0.
\end{equation*}

Choosing $v_j=(-1)^{j+1},\ j\in \mathbb Z$, by \eqref{equ:diffP} we have, for any $t\in \mathbb R,$
\begin{align*}
T^* f(t)&\ge \sum_{\abs{t\over {a^j}}<\eta_0} (-1)^{j+1}\big(\P_{a_{j+1}}^\alpha f(t)-\P_{a_j}^\alpha f(t)\big)\\
&=\frac{ 1}{4^\alpha \Gamma(\alpha) }\sum_{\abs{t\over {a^j}}<\eta_0}
\left({\int_0^{+\infty}\frac{ e^{-1/(4u)}}{u^{\alpha}} f\left({t\over {a^{2(j+1)}}}-u\right) \frac{du}{u}+\int_0^{+\infty}\frac{ e^{-1/(4u)}}{u^{\alpha}} f\left({t\over {a^{2j}}}-u\right) \frac{du}{u}}\right)\\
&=\infty.
\end{align*}
We complete the proof of Theorem \ref{Thm:infny}.
\end{proof}

\medskip

Also, we will give the proof of Theorem \ref{Thm:GrothLinfinity} which gives a local growth  characterization of the operator $T^*$ with $f\in L^\infty(\mathbb R^n)$.
\begin{proof}[Proof of Theorem \ref{Thm:GrothLinfinity}.]
 First, we prove the theorem in the case $1<p<\infty.$ Since $2r<1,$ we know that $B\backslash B_{2r}\neq \emptyset.$ Let $f(t)=f_1(t)+f_2(t)$, where $f_1(t)=f(t)\chi_{B_{2r}}(t)$ and $f_2(t)=f(t)\chi_{B\backslash B_{2r}}(t)$. Then
$$\abs{T^* f(t)}\le \abs{T^* f_1(t)}+\abs{T^* f_2(t)}.$$
By Theorem \ref{thm:Maxi},
\begin{multline*}
\frac{1}{|B_r|} \int_{B_r} \abs{T^* f_1 (t)} dt\le \left(\frac{1}{|B_r|} \int_{B_r} \abs{T^* f_1 (t)}^2 dt\right)^{1/2}
\le C \left(\frac{1}{|B_r|} \int_{\mathbb{R}}\abs{ f_1 (t)}^2 dt\right)^{1/2}\le C\norm{f}_{L^\infty(\mathbb{R})}.
\end{multline*}
We also know that, for any $j\in \mathbb Z,$
 \begin{multline}\label{equ:constant}
 \int_0^\infty\abs{\frac{ a_{j+1}^{2\a}e^{-a_{j+1}^2/(4 s)}-a_j^{2\a} e^{-a_j^2/(4 s)}}{s^{1+\a}}  }~ ds\\
 \le \int_0^\infty{\frac{ a_{j+1}^{2\a}e^{-a_{j+1}^2/(4 s)}+a_j^{2\a} e^{-a_j^2/(4 s)}}{s^{1+\a}}  }~ ds
 =2\cdot 4^\alpha \Gamma(\alpha).
 \end{multline}
 Then, by H\"older's inequality, \eqref{equ:constant}
 and Fubini's Theorem, for $1< p < \infty$ and any $N=(N_1, N_2)$, we have
\begin{align*}\label{equ:Palpha}
&\abs{\sum_{j=N_1}^{N_2}v_j\left(\P_{a_{j+1}}^\a f_2(t)-\P_{a_j}^\a f_2(t)\right)}\nonumber\\
&\le C\sum_{j=N_1}^{N_2} \abs{v_j \int_0^\infty\frac{ a_{j+1}^{2\a}e^{-a_{j+1}^2/(4 s)}-a_j^{2\a} e^{-a_j^2/(4 s))}}{s^{1+\a}} f_2(t-s)~ ds}\nonumber\\
&\le C\norm{v}_{l^p(\mathbb Z)}\left( \sum_{j=N_1}^{N_2}\left(\int_0^\infty\abs{\frac{ a_{j+1}^{2\a}e^{-a_{j+1}^2/(4 s)}-a_j^{2\a} e^{-a_j^2/(4 s)}}{s^{1+\a}}  } \abs{f_2(t-s)}~ ds\right)^{p'}\right)^{1/p'} \nonumber \\
&\le C\norm{v}_{l^p(\mathbb Z)}\Big(\sum_{j=N_1}^{N_2} \Big\{\int_0^\infty\abs{\frac{ a_{j+1}^{2\a}e^{-a_{j+1}^2/(4 s)}-a_j^{2\a} e^{-a_j^2/(4 s)}}{s^{1+\a}}  } \abs{f_2(t-s)}^{p'}~ ds\Big\}  \\
 & \quad \quad  \times  \Big\{\int_0^\infty\abs{\frac{ a_{j+1}^{2\a}e^{-a_{j+1}^2/(4 s)}-a_j^{2\a} e^{-a_j^2/(4 s)}}{s^{1+\a}}  }~ ds  \Big\}^{p'/p} \Big)^{1/p'}
\nonumber \\
&\le C \norm{v}_{l^p(\mathbb Z)}\left(\sum_{j=N_1}^{N_2} \int_0^\infty\abs{\frac{ a_{j+1}^{2\a}e^{-a_{j+1}^2/(4 s)}-a_j^{2\a} e^{-a_j^2/(4 s)}}{s^{1+\a}}  } \abs{f_2(t-s)}^{p'}~ ds   \right)^{1/p'}
\nonumber \\
&\le C \norm{v}_{l^p(\mathbb Z)}\left(\int_0^\infty\sum_{j=-\infty}^{+\infty} \abs{\frac{ a_{j+1}^{2\a}e^{-a_{j+1}^2/(4 s)}-a_j^{2\a} e^{-a_j^2/(4 s)}}{s^{1+\a}}  } \abs{f_2(t-s)}^{p'}~ ds  \right)^{1/p'}
\nonumber \\ \nonumber
&\le C \norm{v}_{l^p(\mathbb Z)}\left(\int_0^\infty \frac1{|s|} \abs{f_2(t-s)}^{p'}~ ds  \right)^{1/p'}\\
&\le  C \norm{v}_{l^p(\mathbb Z)}\int_{\real} \frac1{|t-s|} \abs{f_2(s)}^{p'}~ ds.
\end{align*}
For $s\in B\backslash B_{2r}$ and $t\in B_r$,  we have  $r\le |t-s|\le 2$.
 Then,  we get
\begin{align*}
\frac{1}{|B_r|} \int_{B_r} \abs{T^* f_2(t)}dt &\le    C\frac{1}{|B_r|} \int_{B_r} \left(\int_{\real} \frac1{|t-s|} \abs{f_2(s)}^{p'}~ ds  \right)^{1/p'}dt \\
&\le C\frac{\norm{f}_{L^\infty(\real)}}{|B_r|} \int_{B_r} \left(\int_{r\le |t-s| \le 2} \frac1{|t-s|} ~ ds  \right)^{1/p'}dt \\
 &\sim \Big(\log\frac{2}{r}\Big)^{1/p'}\norm{f}_{L^\infty(\real)}.
\end{align*}
Hence, $$\frac{1}{|B_r|} \int_{B_r} \abs{T^* f(t)}dt\le C\left(1+\Big(\log\frac{2}{r}\Big)^{1/p'}\right)\norm{f}_{L^\infty(\real)}  \le C\Big(\log\frac{2}{r}\Big)^{1/p'}\norm{f}_{L^\infty(\real)}.$$
For the case $p=1$ and $p=\infty$, the proof is similar and easier. Then we get the proof of $(a).$

For $(b),$ when $1< p<\infty,$ for any $0<\varepsilon<p-1$, let  \begin{equation*}\displaystyle f(t) = \sum_{k=-\infty}^{0} (-1)^{k} \chi_{(-a^{2k}, -a^{2k-1}]}(t) \,\,  \hbox{and  }\, \, a_j=a^{j}  ,
\end{equation*}
with $a>1$ being fixed later. Then, the support of $f$ is contained in $[-1, 0),$ and $\{a_j\}_{j\in \mathbb Z}$ is a $\rho$-lacunary sequence with $\rho=a>1.$ We observe that
\begin{eqnarray*}
\abs{\int_0^{+\infty} \frac{ e^{-1/(4u)}}{u^{\alpha}}  f(-u) \frac{du}{u}} \le
\int_0^{+\infty} \frac{ e^{-1/(4u)}}{u^{\alpha}} \frac{du}{u}=4^\alpha\Gamma(\alpha)< \infty.
\end{eqnarray*}
Hence $$\lim_{R\to {+\infty}} \int_R^{+\infty} \frac{ e^{-1/(4u)}}{u^{\alpha}}  f(-u)\frac{du}{u} =0\quad  \hbox{  and  } \quad \lim_{\varepsilon \to 0} \int_0^\varepsilon \frac{ e^{-1/(4u)}}{u^{\alpha}}  f(-u)\frac{du}{u} =0. $$
Also there exists a constant $C>0$ such that  $\displaystyle \lim_{a\to {+\infty}} \int_{a^{-1}}^1 \frac{ e^{-1/(4u)}}{u^{\alpha}}  f(-u)\frac{du}{u}  = \displaystyle \lim_{a\to {+\infty}} \int_{a^{-1}}^1\frac{ e^{-1/(4u)}}{u^{\alpha}} \frac{du}{u} =C.$  So we can choose $a>1$ big enough such that
 \begin{align*}
\int_{a^{-1}}^1 \frac{ e^{-1/(4u)}}{u^{\alpha}}  f(-u)\frac{du}{u} &= \int_{a^{-1}}^1\frac{ e^{-1/(4u)}}{u^{\alpha}}  \frac{du}{u}
 \ge 10 \left(\int_0^{1/a^2} \frac{ e^{-1/(4u)}}{u^{\alpha}} \frac{du}{u}+\int_{a-1}^{+\infty} \frac{ e^{-1/(4u)}}{u^{\alpha}} \frac{du}{u}\right)\nonumber\\
& >  10 \left( \abs{\int_0^{1/a^2} \frac{ e^{-1/(4u)}}{u^{\alpha}}f(-u) \frac{du}{u}}+\abs{\int_{a-1}^{+\infty} \frac{ e^{-1/(4u)}}{u^{\alpha}}f(-u) \frac{du}{u}}\right).
\end{align*}
Therefore, there exists a constant $C_1>0$ such that
\begin{equation}\label{equ:cons2}
\int_0^{+\infty} \frac{ e^{-1/(4u)}}{u^{\alpha}}  f(-u)\frac{du}{u}=C_1>0
\end{equation}
and
\begin{equation}\label{equ:cons33}
0<\int_0^{1/ a^2} \frac{ e^{-1/(4u)}}{u^{\alpha}}\frac{du}{u}+\int_{a-1}^{+\infty} \frac{ e^{-1/(4u)}}{u^{\alpha}}\frac{du}{u}\le {C_1\over 9}.
\end{equation}
On the other hand,
by the dominated convergence theorem, we have
\begin{equation*}
\lim_{h\to 0}\int_0^{+\infty}\frac{ e^{-1/(4u)}}{u^{\alpha}} f(h-u) \frac{du}{u}=\int_0^{+\infty}\frac{ e^{-1/(4u)}}{u^{\alpha}} f(-u) \frac{du}{u}  = C_1>0,
\end{equation*}
where $C_1$ is the constant appeared in \eqref{equ:cons2}.
So, there exists $0<\eta_0<1,$ such that, for $|h|<\eta_0,$
\begin{equation}\label{eq:bigC}
\int_0^{+\infty}\frac{ e^{-1/(4u)}}{u^{\alpha}} f(h-u) \frac{du}{u}\ge {1\over 2}\int_0^{+\infty}\frac{ e^{-1/(4u)}}{u^{\alpha}} f(-u) \frac{du}{u} = \frac{C_1}{2}.
\end{equation}

It can be checked that
$$f(a^{2j}t) = (-1)^j f(t)+(-1)^j\sum_{k=1}^{-j}(-1)^k \chi_{(-a^{2k}, -a^{2k-1}]}(t)$$
when $j\le 0.$ We will always assume $j\le 0$ in the following.
By changing variable,
\begin{align*}
\mathcal{P}_{a_j}^\alpha f(t) &=
\frac{1}{4^\alpha \Gamma(\alpha) }
\int_0^{+\infty} \frac{ e^{-1/(4u)}}{u^{\alpha}} f(t-a^{2j}u) \frac{du}{u}\\
& =
\frac{(-1)^j }{4^\alpha \Gamma(\alpha) }
\int_0^{+\infty} \frac{ e^{-1/(4u)}}{u^{\alpha}} \left\{ f\left({t\over a^{2j}}-u\right)+\sum_{k=1}^{-j}(-1)^k \chi_{(-a^{2k}, -a^{2k-1}]}\left({t\over a^{2j}}-u\right) \right\}\frac{du}{u}.
\end{align*}
Then
\begin{align}\label{equ:integ}
&{\mathcal{P}_{a_{j+1}}^\alpha f(t)-\mathcal{P}_{a_j}^\alpha f(t)}\nonumber\\
& =
\frac{ (-1)^{j+1}}{4^\alpha \Gamma(\alpha) }
\Big\{\, \int_0^{+\infty}\frac{ e^{-1/(4u)}}{u^{\alpha}} f\left({t\over {a^{2(j+1)}}}-u\right)  \frac{du}{u}+\int_0^{+\infty}\frac{ e^{-1/(4u)}}{u^{\alpha}} f\left({t\over {a^{2j}}}-u\right)\frac{du}{u} \\
&\quad +\int_0^{+\infty}\frac{ e^{-1/(4u)}}{u^{\alpha}}\sum_{k=1}^{-j-1}(-1)^k \chi_{(-a^{2k}, -a^{2k-1}]}\left({t\over a^{2j+2}}-u\right)\frac{du}{u}\nonumber\\
&\quad +\int_0^{+\infty}\frac{ e^{-1/(4u)}}{u^{\alpha}}\sum_{k=1}^{-j}(-1)^k \chi_{(-a^{2k}, -a^{2k-1}]}\left({t\over a^{2j}}-u\right) \frac{du}{u}\, \Big\}.\nonumber
\end{align}
{For given $\eta_0$ as above,} let $2r<1$ such that  $r< \eta_0^2$   and $r \sim  a^{2J_0}\eta_0$ for a certain negative integer $J_0$.  If $J_0\le j\le 0$, we have   $\displaystyle {r\over a^{2j}} <\eta_0$.
And,  for any $-r\le t\le r$  we have
$$\displaystyle -1\cdot \chi_{[a-1,+\infty)}(u)\le \sum_{k=1}^{-j-1}(-1)^k\chi_{(-a^{2k}, -a^{2k-1}]}\left({t\over a^{2j+2}}-u\right)\le \chi_{[a-1,+\infty)}(u)$$ and $$\displaystyle -1\cdot \chi_{[a-1,+\infty)}(u) \le \sum_{k=1}^{-j} (-1)^k\chi_{(-a^{2k}, -a^{2k-1}]}\left({t\over a^{2j}}-u\right)\le \chi_{[a-1,+\infty)}(u). $$
Hence, for the third and fourth integrals in \eqref{equ:integ}, by \eqref{equ:cons33} we have
\begin{align} \label{equ:large}
&\int_0^{+\infty}\frac{ e^{-1/(4u)}}{u^{\alpha}}\sum_{k=1}^{-j-1}(-1)^k \chi_{(-a^{2k}, -a^{2k-1}]}\left({t\over a^{2j+2}}-u\right)\frac{du}{u}+\nonumber\\
&\quad \quad \quad \quad\quad \quad\quad \quad\int_0^{+\infty}\frac{ e^{-1/(4u)}}{u^{\alpha}}\sum_{k=1}^{-j}(-1)^k \chi_{(-a^{2k}, -a^{2k-1}]}\left({t\over a^{2j}}-u\right) \frac{du}{u}\\
&\quad \quad \quad \quad\quad \quad\quad \quad\quad \quad \quad \quad\quad \quad \quad \quad \quad\quad \quad\quad \quad\ge (-2)\int_{a-1}^{+\infty}\frac{ e^{-1/(4u)}}{u^{\alpha}}\frac{du}{u}\ge -{2C_1\over 9}.\nonumber
\end{align}
So, for any $t\in [-r, r]$ and $J_0\le j\le 0$, combining \eqref{equ:integ},  \eqref{eq:bigC} and \eqref{equ:large},  we have
\begin{align*}
\abs{\mathcal{P}_{a_{j+1}}^\alpha f(t)-\mathcal{P}_{a_j}^\alpha f(t)} \ge C_\alpha \cdot\left( C_1-{2C_1\over 9}\right)=C\cdot C_1>0.
\end{align*}
We choose the sequence  $\{v_j\}_{j\in \mathbb Z} \in \ell^p(\mathbb Z)$ given by  $\displaystyle v_j=(-1)^{j+1}(-j)^{-{1\over p-\varepsilon}}$, then  for  $N=(J_0, 0),$   we have
\begin{multline*}
 \frac1{2r}\int_{[-r,r]} \abs{T^* f(t)} dt \ge \frac1{2r}\int_{[-r,r]} \abs{T_N^\alpha f(t)} dt \ge  \frac{1}{4^\alpha\Gamma(\alpha)} \frac 1{2r}\int_{[-r,r]}  \sum_{j =J_0}^{0} \left(C\cdot C_1\cdot (-j)^{-{1\over p-\varepsilon}}\right) dt\\ \ge C_{p,\varepsilon,\alpha}\cdot C_1\cdot (-J_0)^{1\over {(p-\varepsilon)'}}  \sim \left(\log \frac 2{r}\right)^{1\over {(p-\varepsilon)'}}.
 \end{multline*}

For $(c)$,  let $v_j=(-1)^{j+1}$,  $a_j=a^{j}$ with $a>1$  and  $0<\eta_0<1$ fixed in the proof of $(b)$.
Consider the same function $f$ as in $(b).$  Then, $\norm{v}_{l^\infty(\mathbb Z)}=1$ and $\norm{f}_{L^\infty(\mathbb R)}=1.$
By the same argument as in $(b)$, with $N=(J_0, 0)$ and $0<\alpha<1$,  we have
 $$\frac1{2r}\int_{[-r,r]} \abs{T^* f(t)} dt\ge \frac1{2r}\int_{[-r,r]} \abs{T_N^\alpha f(t)} dt \ge  \frac{1}{4^\alpha\Gamma(\alpha)} \frac 1{2r}\int_{[-r,r]}  \sum_{j = J_0}^{0} C_1 dt \ge \frac{ C_1}{4^\alpha\Gamma(\alpha)}\cdot (-J_0)  \sim \log \frac 2{r}.$$
\end{proof}
\medskip

\medskip

\noindent{\bf Acknowledgments.}   Zhang Chao is grateful to the Department of Mathematics at Universidad Aut\'onoma de Madrid for its hospitality during the period of this research.

\vspace{3em}

\end{document}